\documentclass{amsart}

% package
\usepackage{amsmath, amssymb, amsthm}

% theorem
\newtheorem{theorem}{Theorem}[section]
\newtheorem{proposition}[theorem]{Proposition}
\newtheorem{lemma}[theorem]{Lemma}
\newtheorem{corollary}[theorem]{Corollary}

% equation
\numberwithin{equation}{section}
\allowdisplaybreaks[0]

% command
\newcommand \N {\mathbb N}
\newcommand \F {\mathcal F}
\newcommand \Z {\mathbb Z}
\newcommand \hocolim {\operatorname{hocolim}}
\newcommand \id {\mathrm{id}}
\newcommand \image {\operatorname{image}}
\newcommand \oc [1] {\lVert #1 \rVert}
\newcommand \pt {\mathbf{pt}}
\newcommand \rem {\mathrm{rem}}
\newcommand \suspjoin {\circledast}

\allowdisplaybreaks[4]

% title
\title
[Frobenius complexes]
{Frobenius complexes and
the homotopy colimit of a diagram of posets over a poset}
\author
[TOUNAI S.]
{TOUNAI Shouta}
\address{Graduate School of Mathematical Sciences,
  The University of Tokyo,
3-8-1 Komaba, Meguro, Tokyo, 153-8914 Japan}
\email{tounai@ms.u-tokyo.ac.jp}
\subjclass[2010]{55P99, 06A07, 13D40}
\keywords{Frobenius complex; poset; order complex; homotopy type;
monoid algebra; Poincar{\'e} series}

\begin{document}

\begin{abstract}
  An affine monoid is an additive monoid which is
  cancellative, pointed and finitely generated.
  An affine monoid $\Lambda$ has the partial order
  defined by $\lambda \le \lambda + \mu$.
  The Frobenius complex is the order complex
  of an open interval of $\Lambda$
  with respect to this partial order.
  The reduced homology of the Frobenius complex
  is related to the torsion group of the monoid algebra $K[\Lambda]$.
  In this paper, we pay attention to homotopy types of
  Frobenius complexes, and we express the homotopy types
  of the Frobenius complexes of $\Lambda$
  in terms of those of $\Lambda_1$ and $\Lambda_2$
  when $\Lambda$ is an affine monoid obtained by
  gluing two affine monoids $\Lambda_1$ and $\Lambda_2$
  with one relation.
  We also state an application to the Poincar{\'e} series
  of the torsion group of the monoid algebra.
\end{abstract}

\maketitle

\section{Introduction}

We consider an additive monoid
which is cancellative, pointed and finitely generated,
which we call an affine monoid.
An affine monoid $\Lambda$ has the partial order $\le$ defined by
$\lambda \le \lambda + \mu$ for elements $\lambda$ and $\mu$ of $\Lambda$.
For a non-zero element $\lambda$ of $\Lambda$
the Frobenius complex $\F(\lambda; \Lambda)$ is
the order complex $\oc{(0, \lambda)_\Lambda}$ of
the open interval of $\Lambda$.

Frobenius complex is introduced by Laudal and Sletsj{\o}e~\cite{LS},
and they showed the isomorphism
\begin{equation*}
  \operatorname{Tor}^{K[\Lambda]}_{i, \lambda}(K, K)
  \cong \Tilde H_{i - 2} \bigl( \F(\lambda; \Lambda); K \bigr),
\end{equation*}
where $K$ is a field.
We pay attention to the Poincar{\'e} series
\begin{align*}
  P_\Lambda(t, z)
  &= P^{K[\Lambda]}_K(t, z) \\
  &= \sum_{i \in \N} \sum_{\lambda \in \Lambda}
  \dim_K \operatorname{Tor}^{K[\Lambda]}_{i, \lambda}(K, K) \cdot
  t^i z^\lambda
\end{align*}
of this torsion group.

Clark and Ehrenborg~\cite{CE} took notice of homotopy types of
Frobenius complexes,
and determined the homotopy types of the Frobenius complexes
of $\Lambda$ by using discrete Morse theory
when $\Lambda$ is the submonoid $\langle a, b \rangle$ of $\N$
generated by two relatively prime integers $a$ and $b$
\cite[Theorem~4.1]{CE}, and
when $\Lambda$ is the submonoid
$\langle\, a + n d \,|\, n \in \N \,\rangle$ of $\N$
generated by the arithmetic sequence for
two relatively prime integers $a$ and $d$ \cite[Theorem~5.1]{CE}.

Tounai~\cite{Tou} expressed the homotopy types
of the Frobenius complexes of $\Lambda[\rho / r]$
in terms of the Frobenius complexes of $\Lambda$,
where $\Lambda[\rho / r]$ denotes the affine monoid
obtained from $\Lambda$ by adjoining the formal $r$-th part
of a reducible element $\rho$ of $\Lambda$ \cite[Theorem~3.1]{Tou}.
This result is an extension of \cite[Theorem~4.1]{CE},
and the proof is based on basic tools of homotopy theory
for posets and CW complexes.

The main theorem of this paper is an extension of
\cite[Theorem~3.1]{Tou},
and express the homotopy types of the Frobenius complexes
of $\Lambda$ in terms of the Frobenius complexes of
$\Lambda_1$ and $\Lambda_2$
when $\Lambda$ is the affine monoid obtained from
the direct sum $\Lambda_1 \oplus \Lambda_2$ of
two affine monoids
by identifying a reducible element $\rho_1$ of $\Lambda_1$
with a reducible element $\rho_2$ of $\Lambda_2$.
As a corollary,
the Poincar{\'e} series of $\Lambda$ is expressed as
\begin{equation*}
  P_{\Lambda}(t, z) =
  \frac{P_{\Lambda_1}(t, z) \cdot P_{\Lambda_2}(t, z)}{1 - t^2 z^\rho},
\end{equation*}
where $\rho$ denotes the equivalence class
of $\rho_1$ and $\rho_2$ in $\Lambda$.
The case $z = 1$ of this formula is a special case of
\cite[Proposition~3.3.5.(2)]{Avr}, and
Avramov~\cite{Avr} implies that the graded version
of this proposition will be also proven in an algebraic way.

To prove the main theorem
we use the notion of homotopy colimit,
which is established by Bousfield and Kan~\cite{BK}.
We refer to \cite{BWW}, \cite{Tho}, \cite{WZZ} and \cite{ZZ}
about homotopy colimits.

\section{Preliminaries}

\subsection{Posets}

A \emph{partially ordered set} (\emph{poset} for short)
is a set $P$ together with a partial order $\le$.
The \emph{order complex} $\oc P$ of a poset $P$ is
the simplicial complex whose vertices are elements of $P$
and whose simplices are non-empty finite chains of $P$.
For posets $P$ and $Q$
a map $f \colon P \to Q$ is a \emph{poset map}
if $p \le p'$ implies $f(p) \le f(p')$ for any $p, p' \in P$.
The simplicial map induced by a poset map $f$
is denoted by $\oc f$.

\begin{proposition}
  [Quillen~{\cite[1.3]{Qui}}]
  Let $f, g \colon P \to Q$ be poset maps.
  If $f \le g$ holds, that is,
  $f(p) \le g(p)$ holds for each $p \in P$,
  then the induced maps $\oc f$ and $\oc g$
  are homotopic.
\end{proposition}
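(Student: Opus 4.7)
The plan is to realize the pointwise inequality $f \le g$ as a single poset map out of a cylinder-like poset, and then to identify the order complex of that cylinder with a topological cylinder on $\oc{P}$.

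First, I would form the product poset $P \times [1]$, where $[1] = \{0 < 1\}$ is the two-element chain, equipped with the componentwise order. Define a map $H \colon P \times [1] \to Q$ by $H(p, 0) = f(p)$ and $H(p, 1) = g(p)$. To see that $H$ is a poset map, the only nontrivial case is $(p, 0) \le (p', 1)$ with $p \le p'$ in $P$; here the hypothesis $f \le g$ gives $f(p) \le g(p) \le g(p')$, as required. Observe that the two natural inclusions $\iota_0, \iota_1 \colon P \to P \times [1]$, $\iota_i(p) = (p, i)$, satisfy $H \circ \iota_0 = f$ and $H \circ \iota_1 = g$.

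Second, I would invoke the standard fact that the order complex $\oc{P \times [1]}$ is canonically homeomorphic to the cylinder $\oc{P} \times I$, where $I = [0, 1]$: a chain of $P \times [1]$ is the same data as a chain of $P$ together with an index at which the second coordinate jumps from $0$ to $1$, and this is exactly the combinatorial description of the prism triangulation of $\oc{P} \times I$. Under this identification, the simplicial maps $\oc{\iota_0}$ and $\oc{\iota_1}$ are the two boundary inclusions $\oc{P} \hookrightarrow \oc{P} \times I$ at heights $0$ and $1$.

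Finally, composing this homeomorphism with the simplicial map $\oc{H} \colon \oc{P \times [1]} \to \oc{Q}$ yields a map $\oc{P} \times I \to \oc{Q}$ whose restrictions to the two ends are $\oc{f}$ and $\oc{g}$, which is precisely a homotopy between them. The only step with any real content is the cylinder identification in the second paragraph; this is routine combinatorics (the prism decomposition), so rather than a genuine obstacle it is simply the point at which I would appeal to the standard geometric realization of product posets.
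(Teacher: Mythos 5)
Your argument is correct, and it is essentially Quillen's own proof of this lemma; the present paper does not reprove it but simply cites Quillen~\cite[1.3]{Qui}. The one point worth being slightly careful about is the identification $\oc{P \times [1]} \approx \oc P \times I$: the natural map $\oc{P \times Q} \to \oc P \times \oc Q$ is a homeomorphism whenever one factor is a finite complex (so that the CW product topology agrees with the ordinary product topology), and $[1]$ is finite, so this holds for arbitrary $P$. With that in place, $\oc H$ composed with this homeomorphism is exactly the required homotopy from $\oc f$ to $\oc g$.
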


\begin{lemma}
  If a poset $P$ has a maximum element,
  then the order complex $\oc P$ is contractible.
\end{lemma}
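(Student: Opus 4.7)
The plan is to apply the preceding proposition of Quillen to a pair of very natural poset maps from $P$ to itself. Let $m$ denote the maximum element of $P$, and consider the identity map $\id \colon P \to P$ together with the constant map $c_m \colon P \to P$ sending every element of $P$ to $m$. Both are poset maps: $\id$ trivially, and $c_m$ because the constant map between any two posets is order-preserving.

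Next I would observe that $\id \le c_m$ in the pointwise order on poset maps: for every $p \in P$ we have $\id(p) = p \le m = c_m(p)$ by the defining property of the maximum. Quillen's proposition then gives that the induced simplicial maps $\oc\id$ and $\oc{c_m}$ are homotopic as maps $\oc P \to \oc P$. But $\oc\id$ is the identity map of $\oc P$, while $\oc{c_m}$ is the constant map at the vertex $m$, so the identity on $\oc P$ is homotopic to a constant map, which is exactly the definition of contractibility.

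There is essentially no obstacle here; the only thing to be slightly careful about is that $c_m$ really is a poset map and that Quillen's proposition is applied in the correct direction (with the two maps going between the \emph{same} pair of posets, so that their induced simplicial maps can be compared as self-maps of $\oc P$). Since both points are immediate, the entire argument is a one-line application of the proposition.
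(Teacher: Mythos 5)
Your proof is correct and follows exactly the same approach as the paper's: apply Quillen's homotopy proposition to the identity map and the constant map at the maximum, using $\id_P \le c_m$. No differences worth noting.
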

\begin{proof}
  The constant map $c$ to the maximum element satisfies
  $\id_P \le c$, and thus the identity map on $\oc P$
  is homotopic to a constant map.
\end{proof}

\begin{lemma}
  \label{lem:deform}
  Let $P$ be a poset and $S$ a subset of $P$.
  If $S^{\le p} = \{\, s \in S \mid s \le p \,\}$
  has a maximum element for each $p \in P$,
  then $\oc S$ is a deformation retract of $\oc P$.
\end{lemma}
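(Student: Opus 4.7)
The plan is to construct an explicit retraction at the poset level and then apply Quillen's proposition (the first result quoted from \cite{Qui}) to conclude on the level of order complexes. Define $r \colon P \to S$ by $r(p) = \max S^{\le p}$, which makes sense by hypothesis.

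First I would verify that $r$ is a poset map: if $p \le p'$ in $P$, then $S^{\le p} \subseteq S^{\le p'}$, so $r(p) = \max S^{\le p}$ lies in $S^{\le p'}$ and is therefore $\le \max S^{\le p'} = r(p')$. Next, for the inclusion $i \colon S \hookrightarrow P$, note that if $s \in S$ then $s$ itself belongs to $S^{\le s}$ and dominates every element of $S^{\le s}$, so $r(i(s)) = s$; that is, $r \circ i = \id_S$. Finally, by definition $r(p) \in S^{\le p}$, so $i(r(p)) \le p$ for every $p \in P$; in other words, $i \circ r \le \id_P$ as poset maps.

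Passing to order complexes, the simplicial map $\oc r$ satisfies $\oc r \circ \oc i = \oc{(r \circ i)} = \id_{\oc S}$, so $\oc i$ admits a strict left inverse. By the Quillen proposition applied to the inequality $i \circ r \le \id_P$, the composite $\oc i \circ \oc r = \oc{(i \circ r)}$ is homotopic to $\id_{\oc P}$. Together these two statements say exactly that $\oc S$ is a deformation retract of $\oc P$.

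There is essentially no obstacle here: the only point that deserves a moment of care is checking that $r$ preserves order, but this is immediate from the monotonicity of $p \mapsto S^{\le p}$ in $p$. The rest is a direct application of Quillen's homotopy criterion.
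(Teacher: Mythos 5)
Your proof is correct and follows essentially the same route as the paper: define the retraction $r(p) = \max S^{\le p}$, check that it is a poset map satisfying $r \circ i = \id_S$ and $i \circ r \le \id_P$, and invoke Quillen's proposition to convert the pointwise inequality into a homotopy $\oc i \circ \oc r \simeq \id_{\oc P}$. The paper states the same three facts about $r$ and concludes without spelling out the Quillen step; you have simply made that last deduction explicit.
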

\begin{proof}
  Let $i \colon S \to P$ be the inclusion map.
  Define $r \colon P \to S$ by
  \begin{equation*}
    r(p) = \max S^{\le p}
    \quad (p \in P).
  \end{equation*}
  Then $r$ is a poset map, and
  satisfies $r \circ i = \id_S$ and $i \circ r \le \id_P$,
  which proves the lemma.
\end{proof}

\begin{theorem}
  [Quillen~{\cite[1.6]{Qui}}]
  \label{thm:QFL}
  Let $f \colon P \to Q$ be a poset map.
  If $\oc{f^{-1}(Q_{\ge q})}$ is contractible for each $q \in Q$,
  then the induced map $\oc f \colon \oc P \to \oc Q$ is
  a homotopy equivalence.
\end{theorem}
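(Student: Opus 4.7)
The plan is to use the mapping cylinder poset to split the problem into two separate homotopy equivalences. Define $M = P \sqcup Q$ with the original orders on $P$ and $Q$, together with the relations $q \le p$ (for $q \in Q$, $p \in P$) whenever $q \le f(p)$ in $Q$. Then $f$ factors as $\oc P \xrightarrow{\oc\iota} \oc M \xrightarrow{\oc r} \oc Q$, where $\iota$ is the inclusion and $r \colon M \to Q$ is defined by $r|_Q = \id$ and $r|_P = f$. That $\oc r$ is a homotopy equivalence is immediate from Lemma~\ref{lem:deform} applied with $S = Q \subseteq M$: for every $x \in M$, the set $Q^{\le x}$ has maximum $x$ if $x \in Q$ and $f(p)$ if $x = p \in P$.

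The heart of the proof is to show that $\oc\iota$ is also a homotopy equivalence. I would build $\oc M$ from $\oc P$ by adjoining the elements of $Q$ one at a time, processed along the reverse of a linear extension of $Q$ (maximal elements first), so that at each stage the newly added element $q$ has no element of the partial subposet strictly below it. The new simplices at this stage are then exactly the joins $\{q\} * \sigma$ for chains $\sigma$ of $M_{>q} = Q_{>q} \sqcup f^{-1}(Q_{\ge q})$, so the stage amounts to attaching the cone $\{q\} * \oc{M_{>q}}$ to the current subcomplex along its base $\oc{M_{>q}}$. By the gluing lemma for cofibrations, each such attachment is a homotopy equivalence provided that $\oc{M_{>q}}$ is contractible.

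The main obstacle is therefore the contractibility of $\oc{M_{>q}}$ for every $q \in Q$. I would prove this by induction on the length of the longest chain in $Q_{\ge q}$. The base case, $q$ maximal in $Q$, is immediate: $M_{>q} = f^{-1}(Q_{\ge q}) = f^{-1}(q)$, which is contractible by hypothesis. For the inductive step, the same top-down cone-attachment procedure applied inside $M_{>q}$ builds $\oc{M_{>q}}$ from $\oc{f^{-1}(Q_{\ge q})}$ (contractible by hypothesis); the intermediate bases $\oc{M_{>q'}}$ for $q' \in Q_{>q}$ are contractible by the inductive hypothesis, since prepending $q$ to any chain in $Q_{\ge q'}$ yields a strictly longer chain in $Q_{\ge q}$. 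Consequently $\oc{M_{>q}}$ is homotopy equivalent to the contractible $\oc{f^{-1}(Q_{\ge q})}$, closing the induction and completing the proof.
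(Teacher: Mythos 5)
The paper does not supply a proof of this theorem: it is Quillen's fiber lemma, stated with attribution and used as a black box, so there is no in-paper argument to compare yours against. Taken on its own, your proof is essentially correct and is a recognizable form of the standard elementary argument for finite posets. The factorization $f = r \circ \iota$ through the mapping-cylinder poset $M$ is sound, the application of Lemma~\ref{lem:deform} with $S = Q$ does give that $\oc Q$ is a deformation retract of $\oc M$, and your description of building $\oc M$ from $\oc P$ is accurate: since no element of $P$ lies below an element of $Q$ in $M$, adding $q$ along a reverse linear extension contributes exactly the simplices $\{q\} * \sigma$ with $\sigma$ a chain in $M_{>q} = Q_{>q} \sqcup f^{-1}(Q_{\ge q})$, so each stage glues on a cone over $\oc{M_{>q}}$ and, by the gluing lemma for cofibrations, is a homotopy equivalence precisely when $\oc{M_{>q}}$ is contractible. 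The downward induction on the height of $Q_{\ge q}$, using that $(M_{>q})_{>q'} = M_{>q'}$ for $q' > q$, correctly reduces contractibility of $\oc{M_{>q}}$ to the hypothesis that $\oc{f^{-1}(Q_{\ge q})}$ is contractible.

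One caveat worth making explicit: both the ``one element at a time'' processing and the induction on longest chain length silently assume $Q$ is finite (or at least that chains in $Q$ have bounded length). The theorem as Quillen states it requires no such hypothesis, and his original argument (via bisimplicial sets/Theorem~A) avoids it; so your proof establishes a slightly weaker statement. That said, every poset $Q$ to which this paper actually applies Theorem~\ref{thm:QFL} is a composition poset $C(\lambda;\Lambda)$, which is finite because $(0,\lambda)_\Lambda$ is, so the restricted version suffices for all uses here. I'd also note that your argument invokes the gluing lemma for cofibrations, a tool not stated in the paper's preliminaries, though it fits naturally with the cone-attachment flavor already present in Theorem~\ref{thm:PFT}.
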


The following is a slightly modified version of \cite[Theorem~2.5]{BWW},
and is proved in the same way.

\begin{theorem}
  [Bj{\"o}rner-Wachs-Welker~{\cite[Theorem~2.5]{BWW}}]
  \label{thm:PFT}
  Let $f \colon P \to Q$ be a poset map,
  and assume that $Q$ has a minimum element $m$.
  If the inclusion
  $\oc{f^{-1}(Q^{< q})} \hookrightarrow \oc{f^{-1}(Q^{\le q})}$
  is homotopic to the constant map to a point $c_q$
  of $\oc{f^{-1}(Q^{\le q})}$ for each non-minimum element $q$ of $Q$,
  then
  \begin{equation*}
    \oc P \simeq \bigvee_{q \in Q} \oc{Q_{> q}} * \oc{f^{-1}(Q^{\le q})},
  \end{equation*}
  where the wedge is formed by identifying
  $q \in \oc{Q_{> m}} * \oc{f^{-1}(Q_{\le m})}$ with
  $c_q \in \oc{Q_{> q}} * \oc{f^{-1}(Q_{\le q})}$
  for each non-minimum element $q$ of $Q$.
\end{theorem}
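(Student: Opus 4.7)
My approach is to adapt the proof of \cite[Theorem~2.5]{BWW}, with the key modification that I work with the down-fibers $F_q := f^{-1}(Q^{\le q})$ in place of the original strict fibers.

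First I would establish that $\oc{P}$ is the homotopy colimit of the diagram $q \mapsto \oc{F_q}$ indexed by $Q$. Since every chain $p_0 < \cdots < p_k$ in $P$ is contained in $F_q$ for $q = f(p_k)$, we have $\oc{P} = \bigcup_{q \in Q} \oc{F_q}$; because the diagram consists of subcomplex inclusions, the canonical map
\begin{equation*}
  \hocolim_{q \in Q} \oc{F_q} \longrightarrow \oc{P}
\end{equation*}
is a homotopy equivalence.

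Next I would expand the homotopy colimit via the Bousfield--Kan model, which builds it inductively by attaching, for each non-minimum $q$, a piece homotopy equivalent to $\oc{Q^{\le q}} * \oc{F_q}$ along the subspace $\oc{Q^{<q}} * \oc{F_q} \cup \oc{Q^{\le q}} * \oc{F_{<q}}$, where $F_{<q} := f^{-1}(Q^{<q})$. The null-homotopy hypothesis lets me replace the inclusion $\oc{F_{<q}} \hookrightarrow \oc{F_q}$ up to homotopy by the constant map at $c_q$; after this replacement, the $q$-piece collapses to a wedge summand homotopy equivalent to $\oc{Q_{>q}} * \oc{F_q}$, attached at $c_q$ to the vertex $q$ of $\oc{Q_{>m}}$ inside the base summand $\oc{Q_{>m}} * \oc{F_m}$.

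The main obstacle will be the combinatorial bookkeeping: I must verify that this iterated cone-construction really produces exactly the wedge in the claim, with no extra identifications coming from longer chains $q_0 < q_1 < \cdots$ in $Q$, and that the attaching maps land precisely at the pairs $(q, c_q)$ as stated. Since this bookkeeping is already handled in BWW's proof, the adaptation comes down to checking that the substitution from strict to down-fibers is compatible with their arguments; this should go through because the relevant input in their proof is always the pair $(\oc{F_{<q}}, \oc{F_q})$ together with a null-homotopy between them, which is precisely what our hypothesis provides.
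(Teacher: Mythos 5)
Your proposal is correct and matches the paper's approach: the paper gives no independent proof, only the remark that the statement is ``a slightly modified version of [BWW, Theorem 2.5] and is proved in the same way,'' and you reproduce exactly the BWW strategy — the projection lemma to identify $\oc{P}$ with the homotopy colimit over $Q$ of the fiber complexes $\oc{F_q}$, then a wedge lemma that collapses each $\oc{F_{<q}} \hookrightarrow \oc{F_q}$ via the given null-homotopy. One correction to your framing, though: [BWW, Theorem~2.5] already works with the fibers $f^{-1}(Q_{\le q})$, so there is no ``substitution from strict to down-fibers'' to verify. The actual modifications are (i) the added assumption that $Q$ has a minimum $m$, which makes $\oc{Q}$ contractible and lets BWW's separate $\oc{Q}$ wedge summand be absorbed into the $q=m$ term $\oc{Q_{>m}} * \oc{f^{-1}(Q_{\le m})}$, and (ii) the replacement of BWW's connectivity hypothesis on $\oc{f^{-1}(Q_{\le q})}$ (namely that it be $\ell(f^{-1}(Q_{<q}))$-connected) by the weaker hypothesis that the inclusion $\oc{f^{-1}(Q_{<q})} \hookrightarrow \oc{f^{-1}(Q_{\le q})}$ is null-homotopic. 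Your closing observation is the right one — that BWW's argument only ever consumes the pair $\bigl(\oc{F_{<q}}, \oc{F_q}\bigr)$ together with a null-homotopy between them — so the adaptation does go through; you have simply misidentified which hypothesis is being relaxed.
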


\begin{lemma}
  [Walker~{\cite[Theorem~5.1.(d)]{Wal}}]
  \label{lem:Walker}
  Let $P$ and $Q$ be posets,
  $p, p' \in P$ and $q, q' \in Q$.
  Assume that $p < p'$ and $q < q'$.
  Then the order complex
  $\oc{\bigl( (p, q), (p', q') \bigr)_{P \times Q}}$
  of the open interval of the product poset
  is homeomorphic to the suspension of the join
  $\oc{(p, p')_P} * \oc{(q, q')_Q}$
  of the order complexes of the open intervals.
\end{lemma}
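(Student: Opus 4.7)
My plan is to exhibit the homeomorphism explicitly by combining the staircase triangulation of a product poset with the join decomposition of a closed-interval order complex. Write $I = ((p,q),(p',q'))_{P\times Q}$, $A = (p,p')_P$, $B = (q,q')_Q$, and $R = [p,p']_P \times [q,q']_Q = [(p,q),(p',q')]_{P\times Q}$. Then $I$ is the proper part of $R$, so $\oc I$ is the subcomplex of $\oc R$ obtained by deleting the two vertices $(p,q)$ and $(p',q')$ together with their open stars.

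First I would invoke the standard staircase triangulation to identify $\oc R \cong \oc{[p,p']_P} \times \oc{[q,q']_Q}$ as topological spaces: each product of simplices in $\oc{[p,p']_P} \times \oc{[q,q']_Q}$ is canonically triangulated by the chains of the product poset. Next I would decompose each factor as a join: a chain in $[p,p']_P$ splits uniquely into a (possibly empty) chain in $\{p,p'\}$ and a chain in $A$, so $\oc{[p,p']_P} = \Delta^1 * \oc A$ where $\Delta^1 = \oc{\{p < p'\}}$, and similarly $\oc{[q,q']_Q} = \Delta^1 * \oc B$. Under these identifications, the two deleted vertices $(p,q)$ and $(p',q')$ become two diagonal corners of the ``square'' $\Delta^1 \times \Delta^1$ sitting inside $(\Delta^1 * \oc A) \times (\Delta^1 * \oc B)$.

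Finally, I would carry out a cell-by-cell analysis of this product after removing the open stars of the two diagonal corners, matching the remaining cellular structure onto the suspension $\Sigma(\oc A * \oc B)$ via an explicit identification that sends the two antipodal cone points to $(p,q')$ and $(p',q)$ and the equator to a canonical embedded copy of $\oc A * \oc B$.

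The main obstacle is this last step: after removing the open stars one must verify that the remaining subspace is genuinely homeomorphic---not merely homotopy equivalent---to the suspension. A route through Quillen's Fiber Lemma (Theorem~\ref{thm:QFL}) or Theorem~\ref{thm:PFT} would yield a homotopy equivalence with less effort, but the statement asserts a homeomorphism, so one must track the product CW structure explicitly and exhibit a cellular identification, which requires some delicate combinatorial bookkeeping.
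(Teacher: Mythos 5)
The paper does not prove this lemma at all; it is quoted verbatim from Walker's Theorem~5.1(d), so there is no internal argument to compare yours against—only the cited one.

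Your outline assembles sensible pieces: the staircase triangulation $\oc R \cong \oc{[p,p']_P} \times \oc{[q,q']_Q}$, the join decompositions $\oc{[p,p']_P} = \Delta^1 * \oc A$ and $\oc{[q,q']_Q} = \Delta^1 * \oc B$, and the correct observation that $\oc I$ (with $I = \bigl((p,q),(p',q')\bigr)$) is the full subcomplex of $\oc R$ on $R \setminus \{(p,q),(p',q')\}$, i.e.\ the complement of the two open stars. But you stop exactly where the content of the lemma lives. The staircase triangulation interleaves the two factors, so $\oc I$ does not sit inside $(\Delta^1 * \oc A)\times(\Delta^1 * \oc B)$ in a way that is visible in join or product coordinates: on the square cell $|\{p,p'\}|\times|\{q,q'\}|$, for instance, the part lying in $\oc I$ is just the two opposite corner vertices $(p,q')$ and $(p',q)$, not the boundary of the square, and a similar collapse happens in every product cell whose two coordinate chains contain both endpoints. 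The topological identity that ultimately makes your route feasible is $CX \times CY \cong C(X*Y)$ for compact polyhedra, which gives $(\Delta^1*\oc A)\times(\Delta^1*\oc B)\cong \Delta^1 * S^0 * \oc A * \oc B$; but to conclude one must realize that homeomorphism \emph{compatibly with the staircase triangulation} so that the two vertices $(p,q)$ and $(p',q')$ land on the endpoints of the outer $\Delta^1$, and removing their open stars therefore leaves $S^0 * \oc A * \oc B = \Sigma(\oc A * \oc B)$. That compatibility is precisely what you label ``delicate combinatorial bookkeeping'' and defer; it is the entire theorem, not a finishing detail. As written, this is a plan rather than a proof and does not yet substitute for the cited argument of Walker.
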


\subsection{Homotopy colimits over a finite poset}

A \emph{diagram} over a poset $P$ (\emph{$P$-diagram} for short)
in a category $\mathcal C$ is a functor $D \colon P \to \mathcal C$.
For $p, q \in P$ with $p \le q$
the induced morphism $D(p) \to D(q)$ of $\mathcal C$
is denoted by $D^p_q$.

Let $D$ be a diagram of topological spaces over a finite poset $P$,
that is, $D$ is a $P$-diagram in the category of topological spaces.
The \emph{homotopy colimit} of $D$ is defined by
\begin{equation*}
  \hocolim D =
  \frac{\coprod_{p \in P} \oc{P_{\ge p}} \times D(p)}{\sim},
\end{equation*}
where $\sim$ is the equivalence relation generated by
\begin{equation*}
  \oc{P_{\ge p}} \times D(p) \ni (a, x)
  \sim (a, D^p_q(x)) \in \oc{P_{\ge q}} \times D(q)
\end{equation*}
for $p, q \in P$ with $p \le q$,
$a \in \oc{P_{\ge q}}$, and $x \in D(p)$.

Note that a natural homeomorphism $\alpha \colon D \to E$
between diagrams induces a homeomorphism
$\hocolim D \approx \hocolim E$.
Moreover, it is known that a natural homotopy equivalence induces
a homotopy equivalence $\hocolim D \simeq \hocolim E$ \cite[3.7]{WZZ}.

For a $P$-diagram $X$ of posets
by composing the order complex functor
we obtain a $P$-diagram of topological spaces,
denoted by $\oc X$.

\begin{theorem}
  [Thomason~{\cite[Theorem~1.2]{Tho}}]
  \label{thm:Thomason}
  Let $X$ be a diagram of posets over a finite poset $P$.
  Then the homotopy colimit of $\oc X$ is homotopy equivalent to
  the order complex of the poset $P \ltimes X$ defined by
  \begin{align*}
    P \ltimes X
    &= \coprod_{p \in P} X(p) \\
    &= \bigl\{\, (p, x) \,\bigm|\, p \in P, \ x \in X(p) \,\bigr\}
  \end{align*}
  and
  \begin{equation*}
    (p, x) \le (q, y)
    \iff \text{$p \le q$ and $X^p_q(x) \le y$.}
  \end{equation*}
\end{theorem}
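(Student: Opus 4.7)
The plan is to apply Quillen's Fiber Lemma (Theorem~\ref{thm:QFL}) after introducing an auxiliary poset $R$ whose order complex $\oc R$ can be identified, up to homotopy, with $\hocolim \oc X$. Define $R$ as the poset of triples $(p, q, x)$ with $p \le q$ in $P$ and $x \in X(p)$, ordered by $(p, q, x) \le (p', q', x')$ iff $p \le p'$, $q \le q'$, and $X^p_{p'}(x) \le x'$. The projection $\sigma \colon R \to P$, $(p, q, x) \mapsto p$, has fiber $P_{\ge p} \times X(p)$ over $p$, a product poset whose order complex is canonically homeomorphic via the staircase triangulation to $\oc{P_{\ge p}} \times \oc{X(p)}$---which is exactly the $p$-th piece of the homotopy colimit. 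The order relation on $R$ across fibers, recorded by the transition maps $X^p_{p'}$, matches the equivalence relation $\sim$ in the definition of $\hocolim$, and I would use this to produce $\oc R \simeq \hocolim \oc X$.

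Next, consider the other projection $\pi \colon R \to P \ltimes X$, $(p, q, x) \mapsto (p, x)$, which is clearly a poset map. For each $(p_0, x_0) \in P \ltimes X$ the preimage
\[
\pi^{-1}\bigl((P \ltimes X)_{\ge (p_0, x_0)}\bigr) = \bigl\{(p, q, x) \in R \,\bigm|\, p \ge p_0,\ X^{p_0}_p(x_0) \le x\bigr\}
\]
has $(p_0, p_0, x_0)$ as a minimum element: for any $(p, q, x)$ in the preimage one has $p_0 \le p$, $p_0 \le q$, and $X^{p_0}_p(x_0) \le x$. Hence the constant map to this minimum is below the identity, and by Quillen's comparison proposition at the start of Section~2 the order complex of the preimage is contractible. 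Theorem~\ref{thm:QFL} then yields $\oc R \simeq \oc{P \ltimes X}$, and combining this with the previous step gives the theorem.

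The main obstacle is the identification $\oc R \simeq \hocolim \oc X$ in the first step. The homotopy colimit is defined as a topological quotient built from products of order complexes, while $\oc R$ is an order complex in the strict simplicial sense, so matching them requires the staircase triangulation of products of simplices together with careful verification that the $\sim$ relation on these pieces is exactly encoded by the order on $R$. Once this geometric bookkeeping is settled, the second application of Quillen's Fiber Lemma concludes the proof cleanly.
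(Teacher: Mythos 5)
First, note the paper does not prove this theorem --- it cites Thomason~\cite{Tho} and uses it as a black box, so you are supplying a proof the paper omits. Your second step is correct and clean: for $\pi\colon R\to P\ltimes X$, $(p,q,x)\mapsto(p,x)$, the element $(p_0,p_0,x_0)$ is a minimum of $\pi^{-1}\bigl((P\ltimes X)_{\ge(p_0,x_0)}\bigr)$ because $p_0\le p\le q$ forces all three defining inequalities, so Theorem~\ref{thm:QFL} does give $\oc R\simeq\oc{P\ltimes X}$.

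The gap is the step you flag, and it is not mere bookkeeping --- it is the whole theorem in disguise. The homotopy colimit takes the pieces $\oc{P_{\ge p}}\times\oc{X(p)}$ and \emph{identifies} points across pieces via $\sim$; it is a quotient. The complex $\oc R$ takes the fibers $\sigma^{-1}(p)\cong P_{\ge p}\times X(p)$ and \emph{adjoins} cross-fiber simplices coming from relations $(p,q,x)<(p',q',x')$ with $p<p'$; nothing is identified. These are genuinely different constructions: already for $P=\{0<1\}$ the staircase-triangulated homotopy colimit has $|X(0)|+|X(1)|$ vertices (each $(1,x)$ in the $0$-piece is glued to $X^0_1(x)$ in the $1$-piece), while $\oc R$ has $2|X(0)|+|X(1)|$. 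There is no simplicial map $\hocolim\oc X\to\oc R$ induced by including the pieces, since those inclusions do not descend to the quotient, and the natural vertex map out of $R$, namely $(p,q,x)\mapsto\bigl(q,X^p_q(x)\bigr)$, lands in $P\ltimes X$ and hence presupposes the theorem. So you have produced a third model $\oc R$ whose equivalence with $\oc{P\ltimes X}$ you can prove, but the bridge from $\oc R$ to $\hocolim\oc X$ is exactly the hard content of Thomason's theorem; establishing it would require a projection or gluing lemma of the type found in \cite{WZZ}, or a simplicial-replacement argument, not the informal matching of fibers with $\sim$ that your sketch relies on.
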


\subsection{Frobenius complexes}

In this paper $\N$ denotes the additive monoid $\Z_{\ge 0}$
of non-negative integers.
An \emph{affine monoid} is an additive monoid $\Lambda$
which satisfies the three following condition.
\begin{enumerate}
  \item
    $\Lambda$ is cancellative, that is,
    $\lambda + \mu = \lambda' + \mu$ implies $\lambda = \lambda'$
    for any $\lambda, \lambda', \mu \in \Lambda$.
  \item
    $\Lambda$ is pointed, that is,
    $\lambda + \mu = 0$ implies $\lambda = \mu = 0$
    for any $\lambda, \mu \in \Lambda$.
  \item
    $\Lambda$ is finitely generated, that is,
    there exist finite elements $\alpha_1, \dots, \alpha_d$ of $\Lambda$
    such that for any element $\lambda$ of $\Lambda$ can be written as
    $\lambda = m_1 \alpha_1 + \dots + m_d \alpha_d$
    for some $m_1, \dots, m_d \in \N$.
\end{enumerate}
For example, a finitely generated submonoid of $\N^d$ is an affine monoid.

An affine monoid $\Lambda$ has the \emph{Frobenius order} $\le$ defined by
\begin{equation*}
  \lambda \le \nu \iff
  \text{there exists $\mu \in \Lambda$ satisfying $\lambda + \mu = \nu$.}
\end{equation*}

For a non-zero element $\lambda$ of $\Lambda$
we define the \emph{Frobenius complex} $\F(\lambda; \Lambda)$ by
\begin{equation*}
  \F(\lambda; \Lambda) = \oc{(0, \lambda)_\Lambda},
\end{equation*}
where $(0, \lambda)_\Lambda$ denotes the open interval of $\Lambda$
with respect to the Frobenius order.
Note that $(0, \lambda)_\Lambda$ is finite~\cite[Proposition~2.5]{Tou}.
We also define
\begin{equation*}
  \F(0; \Lambda) = S^{-2},
\end{equation*}
where $S^{-2}$ is just a formal symbol, not a topological space.

\begin{proposition}
  \label{prp:N}
  The Frobenius complexes of $\N$ satisfy
  \begin{align*}
    \F(0; \N) &= S^{-2} \\
    \F(1; \N) &= S^{-1} \\
    \F(n; \N) &= \pt \quad (n \ge 2).
  \end{align*}
\end{proposition}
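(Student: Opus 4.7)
The plan is to unpack the definitions and read off each case from the structure of the open interval $(0, n)_\N$ under the Frobenius order, which on $\N$ coincides with the usual order $\le$.

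First, the identity $\F(0; \N) = S^{-2}$ is immediate from the convention $\F(0; \Lambda) = S^{-2}$ stated just before the proposition, so there is nothing to prove in that case.

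For $\F(1; \N)$, I would observe that the open interval $(0, 1)_\N$ is empty, because no non-zero element of $\N$ is strictly less than $1$. The order complex of the empty poset is the empty simplicial complex, and in the conventions of the paper (where $S^{-2}$ is the formal symbol for the $(-2)$-sphere) the empty complex is precisely $S^{-1}$. So $\F(1; \N) = S^{-1}$.

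For $n \ge 2$, the open interval is $(0, n)_\N = \{1, 2, \dots, n-1\}$, which is a non-empty chain and in particular has the maximum element $n - 1$. By the lemma stating that the order complex of a poset with a maximum element is contractible, $\F(n; \N) = \oc{(0, n)_\N}$ is contractible and therefore homotopy equivalent to $\pt$.

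No step here is a real obstacle; the whole proof amounts to correctly identifying the open intervals and invoking the contractibility lemma already proved in the preliminaries. The only point deserving care is the bookkeeping of the empty/$S^{-1}$ and $S^{-2}$ conventions, which are fixed by the definitions given immediately above the proposition.
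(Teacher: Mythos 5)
Your proof is correct and follows essentially the same route as the paper: the $n=0$ case by convention, $n=1$ from $(0,1)_\N = \emptyset$, and $n\ge 2$ by noting the open interval $\{1,\dots,n-1\}$ has an extreme element and is therefore contractible (the paper uses the minimum $1$, you use the maximum $n-1$; both invoke the same lemma). The only thing you made explicit that the paper leaves implicit is the identification of the empty order complex with $S^{-1}$, which is the standard convention.
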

\begin{proof}
  The case $n = 0$ follows by definition,
  and the case $n = 1$ from $(0, 1)_\N = \emptyset$.
  If $n \ge 2$, then we have
  \begin{equation*}
    \F(n; \N) = \oc{(0, n)_\N} = \oc{[1, n)_\N} \simeq \pt.
    \qedhere
  \end{equation*}
\end{proof}

\begin{lemma}
  \label{lem:suspjoin}
  Let $\Lambda_1$ and $\Lambda_2$ be affine monoids.
  Then
  \begin{equation*}
    \F(\lambda_1 + \lambda_2; \Lambda_1 \oplus \Lambda_2)
    \approx \F(\lambda_1; \Lambda_1) \suspjoin \F(\lambda_2; \Lambda_2)
  \end{equation*}
  for $\lambda_1 \in \Lambda_1$ and $\lambda_2 \in \Lambda_2$,
  where $X \suspjoin Y$ denotes
  the suspension of the join $X * Y$
  for topological spaces $X$ and $Y$,
  and let $S^{-2} \suspjoin X = X = X \suspjoin S^{-2}$.
\end{lemma}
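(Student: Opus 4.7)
The plan is to reduce the statement to Walker's Lemma~\ref{lem:Walker} by first identifying the Frobenius order on $\Lambda_1 \oplus \Lambda_2$ with the componentwise product order on $\Lambda_1 \times \Lambda_2$. Indeed, $(\mu_1, \mu_2) \le (\nu_1, \nu_2)$ in $\Lambda_1 \oplus \Lambda_2$ means there exists $(\tau_1, \tau_2)$ with $\mu_i + \tau_i = \nu_i$ for each $i$, which is equivalent to $\mu_1 \le \nu_1$ in $\Lambda_1$ and $\mu_2 \le \nu_2$ in $\Lambda_2$. Hence the open interval $\bigl( (0,0), (\lambda_1, \lambda_2) \bigr)_{\Lambda_1 \oplus \Lambda_2}$ coincides as a poset with $\bigl( (0,0), (\lambda_1, \lambda_2) \bigr)_{\Lambda_1 \times \Lambda_2}$.

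Next I would split into three cases according to how many of $\lambda_1, \lambda_2$ vanish. If $\lambda_1, \lambda_2 > 0$, Walker's Lemma applies directly with $P = \Lambda_1$, $Q = \Lambda_2$, $p = q = 0$, $p' = \lambda_1$, $q' = \lambda_2$, yielding
\begin{equation*}
  \F(\lambda_1 + \lambda_2; \Lambda_1 \oplus \Lambda_2)
  = \oc{\bigl( (0,0), (\lambda_1, \lambda_2) \bigr)_{\Lambda_1 \times \Lambda_2}}
  \approx \Sigma \bigl( \oc{(0, \lambda_1)_{\Lambda_1}} * \oc{(0, \lambda_2)_{\Lambda_2}} \bigr),
\end{equation*}
which is precisely $\F(\lambda_1; \Lambda_1) \suspjoin \F(\lambda_2; \Lambda_2)$.

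For the degenerate cases I would argue directly, without Walker's Lemma. If $\lambda_1 = 0$ and $\lambda_2 > 0$ (the symmetric case being identical), then for any $(\mu_1, \mu_2)$ in the open interval $\bigl( (0,0), (0, \lambda_2) \bigr)_{\Lambda_1 \oplus \Lambda_2}$ the inequality $\mu_1 \le 0$ forces $\mu_1 = 0$ because $\Lambda_1$ is pointed, so the interval is isomorphic to $(0, \lambda_2)_{\Lambda_2}$, giving
\begin{equation*}
  \F\bigl( (0, \lambda_2); \Lambda_1 \oplus \Lambda_2 \bigr)
  \approx \F(\lambda_2; \Lambda_2)
  = S^{-2} \suspjoin \F(\lambda_2; \Lambda_2)
  = \F(0; \Lambda_1) \suspjoin \F(\lambda_2; \Lambda_2).
\end{equation*}
If $\lambda_1 = \lambda_2 = 0$, both sides equal $S^{-2}$ by definition and by the stated convention.

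I do not expect any serious obstacle: the content is entirely contained in Walker's Lemma, and the only thing to be careful about is verifying the compatibility of the convention $S^{-2} \suspjoin X = X$ with the $\lambda_i = 0$ edge cases, which is exactly why Proposition~\ref{prp:N}'s choice $\F(0; \N) = S^{-2}$ and the degenerate convention for $\suspjoin$ were set up in this way.
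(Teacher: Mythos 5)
Your proof is correct and takes the same route the paper does: the paper's proof is the single line ``It follows from Lemma~\ref{lem:Walker}.'' You have simply filled in the identification of the Frobenius order on $\Lambda_1 \oplus \Lambda_2$ with the product order and handled the degenerate cases $\lambda_i = 0$ explicitly, which the paper leaves implicit.
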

\begin{proof}
  It follows from Lemma~\ref{lem:Walker}.
\end{proof}

\begin{lemma}
  \label{lem:remainder}
  Let $\Lambda$ be an affine monoid and
  $\rho$ a non-zero element of $\Lambda$.
  Then for any $\lambda \in \Lambda$ there uniquely exist $\ell \in \N$ and
  $\Hat \lambda \in \Lambda$ which satisfy
  $\ell \rho + \Hat \lambda = \lambda$ and $\Hat \lambda \ngeq \rho$.
\end{lemma}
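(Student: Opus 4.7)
\emph{Proof plan.} The statement is a division-with-remainder claim, so I will split it into existence and uniqueness, both driven by cancellativity and pointedness of $\Lambda$.

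For existence, the natural candidate for $\ell$ is the largest non-negative integer with $\ell \rho \le \lambda$, so the first task is to check that the set $M = \{\, k \in \N \mid k\rho \le \lambda \,\}$ is non-empty and finite. Non-emptiness is clear since $0 \in M$. Finiteness I would deduce from the fact, quoted from \cite[Proposition~2.5]{Tou}, that the closed interval $[0,\lambda]_\Lambda = (0,\lambda)_\Lambda \cup \{0,\lambda\}$ is finite. Every element $k\rho$ with $k \in M$ lies in this interval, and the map $k \mapsto k\rho$ is injective: if $k\rho = k'\rho$ with $k < k'$, then cancellativity would give $(k'-k)\rho = 0$, and since $\Lambda$ is pointed this forces $\rho = 0$, contradicting the hypothesis. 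Hence $M$ is finite and $\ell = \max M$ exists. Then $\ell\rho \le \lambda$ gives a (unique, by cancellativity) $\hat\lambda \in \Lambda$ with $\ell\rho + \hat\lambda = \lambda$, and the required property $\hat\lambda \ngeq \rho$ follows from the maximality of $\ell$: if $\hat\lambda = \rho + \mu$ for some $\mu \in \Lambda$, then $(\ell+1)\rho + \mu = \lambda$, so $\ell+1 \in M$, contradicting the choice of $\ell$.

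For uniqueness, suppose $\ell\rho + \hat\lambda = \ell'\rho + \hat\lambda' = \lambda$ with $\hat\lambda, \hat\lambda' \ngeq \rho$ and say $\ell \le \ell'$. Cancellativity in $\Lambda$ (applied to the common summand $\ell\rho$) yields $\hat\lambda = (\ell'-\ell)\rho + \hat\lambda'$. If $\ell' > \ell$, this exhibits $\hat\lambda \ge \rho$, which is forbidden; so $\ell = \ell'$, and then a second cancellation gives $\hat\lambda = \hat\lambda'$.

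The only mildly delicate step is the finiteness of $M$, which is where pointedness, cancellativity, and the finiteness of closed Frobenius intervals all come together; the rest is bookkeeping with the monoid axioms.
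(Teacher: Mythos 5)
Your proof is correct and follows essentially the same route as the paper: choose $\ell$ as the largest integer with $\ell\rho \le \lambda$, take the complementary $\hat\lambda$, deduce $\hat\lambda \ngeq \rho$ from maximality, and prove uniqueness by cancellation. The only difference is that the paper outsources the existence of such a maximal $\ell$ to \cite[Lemma~2.8]{Tou}, whereas you re-derive it directly from the finiteness of $[0,\lambda]_\Lambda$ together with the injectivity of $k \mapsto k\rho$; this is a harmless and self-contained way to fill that gap.
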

\begin{proof}
  Let $\lambda \in \Lambda$.
  By \cite[Lemma~2.8]{Tou},
  there exists a maximum $\ell$ satisfying $\ell \rho \le \lambda$.
  Then we can take $\Hat \lambda \in \Lambda$ satisfying
  $\ell \rho + \Hat \lambda = \lambda$.
  The maximality of $\ell$ implies $\Hat \lambda \not\ge \rho$.
  We next show the uniqueness.
  Let $\ell \rho + \Hat \lambda = \ell' \rho + \Hat \lambda'$
  and $\Hat \lambda, \Hat \lambda' \not\ge \rho$.
  We can assume that $\ell \le \ell'$.
  Cancellativity implies $\Hat \lambda = (\ell' - \ell) \rho + \Hat \lambda'$.
  Since $\Hat \lambda \not\ge \rho$, we have $\ell = \ell'$,
  and thus $\Hat \lambda = \Hat \lambda'$.
\end{proof}

The \emph{composition poset} $C(\lambda; \Lambda)$ is the poset
of non-trivial ordered partitions of $\lambda$ in $\Lambda$,
that is,
\begin{equation*}
  C(\lambda; \Lambda) =
  \biggl\{\, [\xi^{(1)}|\dotsb|\xi^{(k)}] \,\biggm|\,
    k \ge 2, \ \xi^{(i)} \in \Lambda \setminus \{ 0 \}, \
  \sum_{i = 1}^k \xi^{(i)} = \lambda \,\biggr\}
\end{equation*}
and the order is generated by
\begin{equation*}
  [\xi^{(1)}|\dotsb|\xi^{(i)} + \xi^{(i + 1)}|\dotsb|\xi^{(k)}]
  \le [\xi^{(1)}|\dotsb|\xi^{(i)}|\xi^{(i + 1)}|\dotsb|\xi^{(k)}]
\end{equation*}
for $[\xi^{(1)}|\dotsb|\xi^{(k)}] \in C(\lambda; \Lambda)$ with $k \ge 3$
and $i = 1, \dots, k - 1$.

\begin{proposition}
  \label{prp:comp}
  For an affine monoid $\Lambda$ and
  a non-zero element $\lambda$ of $\Lambda$
  the Frobenius complex $\F(\lambda; \Lambda)$ is homeomorphic to
  the order complex of the composition poset $C(\lambda; \Lambda)$.
\end{proposition}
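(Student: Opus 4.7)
The plan is to exhibit $C(\lambda; \Lambda)$ as the face poset of $\F(\lambda; \Lambda)$ and then invoke the standard fact that the order complex of the face poset of a simplicial complex is its barycentric subdivision, which is canonically homeomorphic to the complex itself.

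First I would define a map from $C(\lambda; \Lambda)$ to the set of simplices of $\F(\lambda; \Lambda) = \oc{(0, \lambda)_\Lambda}$ by
\begin{equation*}
  [\xi^{(1)} \mid \dotsb \mid \xi^{(k)}]
  \longmapsto
  \bigl\{\, \xi^{(1)},\ \xi^{(1)} + \xi^{(2)},\ \dots,\ \xi^{(1)} + \dots + \xi^{(k - 1)} \,\bigr\}.
\end{equation*}
Since each $\xi^{(i)}$ is non-zero and their total sum is $\lambda$, each partial sum lies in $(0, \lambda)_\Lambda$, and the partial sums form a strictly increasing chain because $\Lambda$ is cancellative and pointed. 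Conversely, a chain $\eta_1 < \dotsb < \eta_{k - 1}$ in $(0, \lambda)_\Lambda$ produces a composition by taking consecutive differences (setting $\eta_0 = 0$, $\eta_k = \lambda$), and by Lemma~\ref{lem:remainder}-type cancellativity arguments these differences are uniquely determined and non-zero. This yields a bijection between $C(\lambda; \Lambda)$ and the simplices of $\F(\lambda; \Lambda)$.

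Next I would verify that this bijection is an order isomorphism from $C(\lambda; \Lambda)$ to the face poset of $\F(\lambda; \Lambda)$ ordered by inclusion. The generating relation
\begin{equation*}
  [\xi^{(1)} \mid \dotsb \mid \xi^{(i)} + \xi^{(i + 1)} \mid \dotsb \mid \xi^{(k)}]
  \le
  [\xi^{(1)} \mid \dotsb \mid \xi^{(i)} \mid \xi^{(i + 1)} \mid \dotsb \mid \xi^{(k)}]
\end{equation*}
corresponds under the bijection to deleting the partial sum $\xi^{(1)} + \dots + \xi^{(i)}$ from the associated chain, that is, to passing to a face. Hence refinement in $C(\lambda; \Lambda)$ matches inclusion of simplices in $\F(\lambda; \Lambda)$.

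Finally, I would appeal to the standard fact that for any simplicial complex $K$ the order complex of its face poset is the barycentric subdivision $\operatorname{sd} K$, together with the canonical homeomorphism $\operatorname{sd} K \approx K$. Applying this to $K = \F(\lambda; \Lambda)$ and using the identification of $C(\lambda; \Lambda)$ with the face poset established above gives
\begin{equation*}
  \oc{C(\lambda; \Lambda)} \approx \operatorname{sd} \F(\lambda; \Lambda) \approx \F(\lambda; \Lambda).
\end{equation*}
There is no real obstacle here; the only thing requiring a bit of care is checking that every chain in $(0, \lambda)_\Lambda$ actually yields a composition with all parts non-zero in $\Lambda$, which is guaranteed by strictness of the chain together with cancellativity.
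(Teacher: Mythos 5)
Your proof is correct and follows essentially the same approach as the paper: both define the bijection sending a composition to its chain of partial sums, identify $C(\lambda;\Lambda)$ with the face poset of $\F(\lambda;\Lambda)$, and conclude via the barycentric subdivision homeomorphism. You simply spell out a few more of the routine verifications (strict monotonicity of partial sums, non-vanishing of consecutive differences) that the paper leaves implicit.
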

\begin{proof}
  Define the map $\Phi$ from $C(\lambda; \Lambda)$ to
  the face poset of $\oc{(0, \lambda)_\Lambda}$ by
  \begin{equation*}
    \Phi \bigl( [\xi^{(1)}|\dotsb|\xi^{(k)}] \bigr) =
    \bigl \{ \xi^{(1)} < \xi^{(1)} + \xi^{(2)} < \dots <
    \xi^{(1)} + \dots + \xi^{(k - 1)} \bigr \}.
  \end{equation*}
  Then the inverse of $\Phi$ is given by
  \begin{equation*}
    \Phi^{-1} \bigl( \{ \mu_0 < \dots < \mu_k \} \bigr) =
    \bigl[ \mu_0 | \mu_1 - \mu_0 | \dotsb
    | \mu_k - \mu_{k - 1} | \lambda - \mu_k \bigr].
  \end{equation*}
  We can check that both $\Phi$ and $\Phi^{-1}$ are order-preserving.
  Thus $\oc{C(\lambda; \Lambda)}$ is isomorphic to
  the barycentric subdivision of $\oc{(0, \lambda)_\Lambda}$.
\end{proof}

A monoid homomorphism $\varphi \colon \Lambda \to \Lambda'$
between two affine monoids is \emph{proper} if
$\varphi(\lambda) = 0$ implies $\lambda = 0$ for any $\lambda \in \Lambda$.
Note that if $\varphi$ is proper, then $\varphi$ is strictly order-preserving,
but not necessarily injective.
A proper homomorphism $\varphi \colon \Lambda \to \Lambda'$ induces poset maps
\begin{align*}
  \varphi &\colon (0, \lambda)_\Lambda
  \to \bigl( 0, \varphi(\lambda) \bigr)_{\Lambda'} \\
  \varphi_* &\colon C(\lambda; \Lambda)
  \to C \bigl( \varphi(\lambda); \Lambda' \bigr)
\end{align*}
for a non-zero element $\lambda$ of $\Lambda$
in obvious ways.

\begin{lemma}
  \label{lem:going down}
  Let $\varphi \colon \Lambda \to \Lambda'$ be
  a proper homomorphism between affine monoids,
  $\lambda$ a non-zero element of $\Lambda$,
  and $\lambda' = \varphi(\lambda) \in \Lambda'$.
  For $\xi \in C(\lambda; \Lambda)$ and $\eta \in C(\lambda'; \Lambda')$
  which satisfy $\varphi_*(\xi) \ge \eta$
  there uniquely exists $\xi' \in C(\lambda; \Lambda)$ satisfying
  $\xi \ge \xi'$ and $\varphi_*(\xi') = \eta$.
\end{lemma}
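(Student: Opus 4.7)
The plan is to translate everything into statements about cut-sequences of compositions. Unfolding the definition of the order on $C(\lambda; \Lambda)$, the relation $\xi \ge \xi'$ is equivalent to $\xi'$ being a coarsening of $\xi$: writing $\xi = [\xi^{(1)}|\dotsb|\xi^{(k)}]$, there exist indices $0 = j_0 < j_1 < \dotsb < j_m = k$ such that the $s$-th part of $\xi'$ equals $\xi^{(j_{s-1}+1)} + \dotsb + \xi^{(j_s)}$. Likewise, $\varphi_*(\xi) \ge \eta$ means $\eta$ is a coarsening of $\varphi_*(\xi) = [\varphi(\xi^{(1)})|\dotsb|\varphi(\xi^{(k)})]$, prescribed by some cuts $0 = i_0 < i_1 < \dotsb < i_m = k$ with $\eta^{(s)} = \varphi(\xi^{(i_{s-1}+1)}) + \dotsb + \varphi(\xi^{(i_s)})$.

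For existence, I would simply apply the cuts $i_0 < \dotsb < i_m$ coming from $\eta$ back in $\Lambda$: define $\xi'^{(s)} = \xi^{(i_{s-1}+1)} + \dotsb + \xi^{(i_s)}$. Each $\xi'^{(s)}$ is non-zero, because its image under $\varphi$ equals $\eta^{(s)} \neq 0$ and $\varphi$ is proper; hence $\xi' \in C(\lambda; \Lambda)$. Additivity of $\varphi$ then gives $\varphi_*(\xi') = \eta$, and $\xi \ge \xi'$ holds by construction.

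For uniqueness, any $\xi'$ with $\xi \ge \xi'$ and $\varphi_*(\xi') = \eta$ must arise from some cut-sequence $0 = j_0 < \dotsb < j_m = k$, and the condition $\varphi_*(\xi') = \eta$ forces the partial sums $\varphi(\xi^{(1)}) + \dotsb + \varphi(\xi^{(j_s)})$ to coincide with the prescribed partial sums $\eta^{(1)} + \dotsb + \eta^{(s)}$. Since each $\varphi(\xi^{(i)})$ is non-zero and $\Lambda'$ is pointed and cancellative, the sequence $j \mapsto \varphi(\xi^{(1)}) + \dotsb + \varphi(\xi^{(j)})$ is strictly increasing in the Frobenius order and therefore injective, so the cuts $j_s$ are uniquely pinned down by the target partial sums and must agree with $i_s$.

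The main obstacle is purely bookkeeping: spelling out rigorously that $\xi \ge \xi'$ corresponds to a coarsening, and that the coarsening is encoded by a single cut-sequence. Once this combinatorial translation is in place, both existence and uniqueness follow from additivity and properness of $\varphi$ together with the fact that, in a pointed cancellative monoid, the partial sums of a sequence of non-zero elements are strictly increasing.
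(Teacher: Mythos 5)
Your argument is essentially the same as the paper's, just unfolded: the paper invokes the isomorphism $\Phi$ from the proof of Proposition~\ref{prp:comp}, which turns a composition into its chain of partial sums, so that $\xi \ge \xi'$ becomes a subchain relation and $\varphi_*$ becomes "apply $\varphi$ to the chain"; uniqueness of the subchain then follows from $\varphi$ being strictly order-preserving (hence injective on chains). Your cut-sequence bookkeeping and the strict monotonicity of partial sums are precisely this translation carried out by hand, so the proof is correct and follows the same route.
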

\begin{proof}
  We use the poset isomorphism $\Phi$ defined
  in the proof of Proposition~\ref{prp:comp}.
  Let $\{ \mu_1 < \dots < \mu_k \}$ and $\{ \mu'_1 < \dots < \mu'_{k'} \}$
  be non-empty chains of $(0, \lambda)_\Lambda$ and
  $(0, \lambda')_{\Lambda'}$, respectively.
  Assume that $\varphi(\{ \mu_1, \dots, \mu_k \})$ contains
  $\{ \mu'_1, \dots, \mu'_{k'} \}$.
  Since $\varphi$ is strictly order-preserving,
  there uniquely exists a subchain of $\{ \mu_1, \dots, \mu_k \}$
  whose image by $\varphi$ coincides with $\{ \mu'_1, \dots, \mu'_{k'} \}$.
\end{proof}

An \emph{additive equivalence relation} on an additive monoid $\Lambda$ is
an equivalence relation which preserves the addition, that is,
$x \sim x'$ and $y \sim y'$ imply $x + y \sim x' + y'$
for any $x, x', y, y' \in \Lambda$.
Clearly, there exist a smallest additive equivalence relation
which contains a given relation.

An element $\rho$ of an affine monoid $\Lambda$ is \emph{reducible}
if there exist non-zero elements $\sigma$ and $\tau$ of $\Lambda$
satisfying $\sigma + \tau = \rho$.

\subsection{Poincar{\'e} series}

Let $\Lambda$ be an affine monoid,
and fix a field $K$.

\begin{theorem}
  [Laudal-Sletsj{\o}e~\cite{LS}]
  There is an isomorphism
  \begin{equation*}
    \operatorname{Tor}^{K[\Lambda]}_{i, \lambda}(K, K)
    \cong \Tilde H_{i - 2} \bigl( \F(\lambda; \Lambda); K \bigr)
  \end{equation*}
  for each $i \in \N$ and $\lambda \in \Lambda$.
\end{theorem}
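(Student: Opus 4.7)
My approach is to realize $\operatorname{Tor}^{K[\Lambda]}(K, K)$ as the homology of the reduced bar complex and then identify its $\lambda$-graded piece (shifted by two) with the augmented simplicial chain complex of $\F(\lambda; \Lambda)$, using the composition poset of Proposition~\ref{prp:comp} as the bridge.

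First I would set up the reduced bar complex. Let $\bar R \subseteq K[\Lambda]$ denote the augmentation ideal, which has $K$-basis $\Lambda \setminus \{0\}$, and set $\bar B_n = \bar R^{\otimes n}$ with differential
\begin{equation*}
  \bar d [\xi_1 | \dotsb | \xi_n]
  = \sum_{i=1}^{n-1} (-1)^i [\xi_1 | \dotsb | \xi_i + \xi_{i+1} | \dotsb | \xi_n],
\end{equation*}
so that $H_n(\bar B_\bullet) \cong \operatorname{Tor}^{K[\Lambda]}_n(K, K)$. The $\Lambda$-grading on $K[\Lambda]$ promotes $\bar B_\bullet$ to a $\Lambda$-graded complex, and $(\bar B_n)_\lambda$ has $K$-basis the $n$-tuples $(\xi_1, \dotsc, \xi_n) \in (\Lambda \setminus \{0\})^n$ with $\xi_1 + \dotsb + \xi_n = \lambda$.

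Next, for $n \ge 2$ and $\lambda \ne 0$, these tuples are precisely the elements of $C(\lambda; \Lambda)$ with $n$ parts, which by Proposition~\ref{prp:comp} (via the map $\Phi$) are in bijection with the $(n-2)$-simplices of $\F(\lambda; \Lambda)$. Under this bijection, merging the $i$-th and $(i+1)$-th parts corresponds exactly to deleting the $i$-th vertex $\xi_1 + \dotsb + \xi_i$ from the chain $\xi_1 < \xi_1 + \xi_2 < \dotsb < \xi_1 + \dotsb + \xi_{n-1}$. Consequently, with a suitable sign convention on the chain complex, the bar differential becomes the simplicial boundary, yielding an isomorphism of complexes
\begin{equation*}
  (\bar B_{\bullet + 2})_\lambda \cong C_\bullet(\F(\lambda; \Lambda); K),
\end{equation*}
and hence the theorem for $i \ge 2$.

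Finally I would dispatch the degenerate cases using the conventions $\F(0; \Lambda) = S^{-2}$, $\Tilde H_{-2}(S^{-2}) = K$, and $\Tilde H_{-1}(\emptyset) = K$. For $\lambda = 0$ only $(\bar B_0)_0 = K$ is non-zero, matching $\operatorname{Tor}_{0, 0} \cong K$ and $\operatorname{Tor}_{i, 0} = 0$ for $i \ge 1$. For $\lambda \ne 0$ we have $(\bar B_0)_\lambda = 0$ and $(\bar B_1)_\lambda \cong K$ with vanishing outgoing differential, while the incoming differential from $(\bar B_2)_\lambda$ hits $[\lambda]$ precisely when $\lambda$ admits a non-trivial two-part decomposition, that is, when $(0, \lambda)_\Lambda \ne \emptyset$; this matches $\Tilde H_{-1}(\F(\lambda; \Lambda))$. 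The main obstacle is the careful bookkeeping of signs identifying the bar differential with the simplicial boundary, together with the reconciliation of the bar/simplicial degree shift with the reduced-homology conventions in degrees $-1$ and $-2$; once these are settled, the rest is formal.
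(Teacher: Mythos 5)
The paper does not prove this statement---it cites it directly from Laudal and Sletsj{\o}e~\cite{LS} and gives no argument of its own, so there is no internal proof to compare your attempt against. That said, your reduced-bar-complex argument is the standard way this isomorphism is established, and your outline is correct: the $\lambda$-graded piece of $\bar R^{\otimes n}$ is spanned by $n$-part compositions of $\lambda$ into nonzero summands, the map $\Phi$ from Proposition~\ref{prp:comp} identifies those (for $n \ge 2$) with $(n-2)$-chains in $(0,\lambda)_\Lambda$, the merge-the-adjacent-parts differential becomes (up to a global sign, which is harmless) the deletion-of-a-vertex simplicial boundary, and the $(\bar B_1)_\lambda \cong K$ term plays the role of the augmentation, so that $(\bar B_{\bullet})_\lambda$ is precisely the degree-shifted \emph{augmented} chain complex of $\F(\lambda;\Lambda)$. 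Your edge-case analysis for $\lambda = 0$ (using pointedness to kill $(\bar B_n)_0$ for $n \ge 1$, matching the $S^{-2}$ convention) and for $i = 1$ with $\lambda \ne 0$ (matching $\Tilde H_{-1}(\emptyset) = K$ when $\lambda$ is irreducible) is also correct. The one item you flag as remaining---the sign bookkeeping---is genuinely routine, and one cosmetic simplification is to phrase the $i \ge 1$ cases uniformly by saying $(\bar B_\bullet)_\lambda$ \emph{is} the augmented simplicial chain complex shifted by two, rather than separating $i \ge 2$ from $i = 1$.
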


The Poincar{\'e} series of $\Lambda$ is defined by
\begin{equation*}
  P_\Lambda(t, z) = \sum_{i \in \N} \sum_{\lambda \in \Lambda}
  \beta_i(\lambda; \Lambda) t^i z^\lambda,
\end{equation*}
where
\begin{equation*}
  \beta_i(\lambda; \Lambda) =
  \dim_K \operatorname{Tor}^{K[\Lambda]}_{i, \lambda}(K, K)
  \quad (i \in \N, \ \lambda \in \Lambda).
\end{equation*}
By the previous theorem, we have
\begin{equation}
  \label{eq:LS}
  \beta_i(\lambda; \Lambda) =
  \Tilde \beta_{i - 2} \bigl( \F(\lambda; \Lambda) \bigr),
\end{equation}
where $\Tilde \beta_i(X)$ denotes the $i$-th reduced Betti number
of a topological space $X$, that is,
\begin{equation*}
  \Tilde \beta_i(X) = \dim_K \Tilde H_i(X; K).
\end{equation*}
We also define
\begin{equation*}
  \Tilde \beta_i(S^{-2}) =
  \begin{cases}
    1 & \text{if $i = -2$,} \\
    0 & \text{otherwise.}
  \end{cases}
\end{equation*}

\begin{lemma}
  Let $X$ and $Y$ be topological spaces.
  Then
  \begin{equation*}
    \Tilde \beta_{i - 2}(X \suspjoin Y) =
    \sum_{j + k = i} \Tilde \beta_{j - 2}(X) \cdot \Tilde \beta_{k - 2}(Y).
  \end{equation*}
\end{lemma}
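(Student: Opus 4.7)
The plan is to reduce the statement to two standard homological facts: the suspension isomorphism $\tilde H_n(\Sigma Z; K) \cong \tilde H_{n-1}(Z; K)$ and the K\"unneth-type formula for the join, which over a field $K$ takes the clean form
\begin{equation*}
  \tilde H_n(X * Y; K) \cong \bigoplus_{p + q = n - 1} \tilde H_p(X; K) \otimes_K \tilde H_q(Y; K).
\end{equation*}
Since $X \suspjoin Y = \Sigma(X * Y)$ by definition, combining these two isomorphisms will give, on the level of Betti numbers,
\begin{equation*}
  \tilde \beta_n(X \suspjoin Y)
  = \tilde \beta_{n - 1}(X * Y)
  = \sum_{p + q = n - 2} \tilde \beta_p(X) \cdot \tilde \beta_q(Y).
\end{equation*}

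Next I would perform the index shift required by the statement. Substituting $n = i - 2$ on the left and reindexing the right-hand sum by $j = p + 2$, $k = q + 2$ (so that $j + k = (p + q) + 4 = i$), I obtain exactly
\begin{equation*}
  \tilde \beta_{i - 2}(X \suspjoin Y)
  = \sum_{j + k = i} \tilde \beta_{j - 2}(X) \cdot \tilde \beta_{k - 2}(Y),
\end{equation*}
which is the desired identity.

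Finally, I would verify that the identity remains valid when $X$ or $Y$ is the formal symbol $S^{-2}$, since this case is not covered by genuine topological arguments. By the convention $X \suspjoin S^{-2} = X$ and $\tilde \beta_i(S^{-2}) = 1$ only when $i = -2$, the right-hand sum collapses to the single term with $k = 0$, giving $\tilde \beta_{i - 2}(X) \cdot 1$, matching the left-hand side; the case with $X = S^{-2}$ is symmetric, and the case $X = Y = S^{-2}$ is an immediate check.

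I do not expect a serious obstacle: the K\"unneth formula for joins and the suspension isomorphism are standard, and the only real content is the bookkeeping of the $-2$ shift in the indexing convention for Frobenius complexes. The statement should be regarded as a compatibility between Lemma~\ref{lem:suspjoin} and the formal conventions introduced for $S^{-2}$ and $\tilde \beta_i(S^{-2})$.
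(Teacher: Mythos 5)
Your proof is correct and follows essentially the same route as the paper's: apply the suspension isomorphism to $X \suspjoin Y = \Sigma(X * Y)$, then the K\"unneth-type formula for joins over a field (the paper cites Milnor for this step), and reindex. The extra check of the $S^{-2}$ convention is a sensible sanity check but is not strictly needed here, since the lemma as stated assumes $X$ and $Y$ are genuine topological spaces.
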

\begin{proof}
  By \cite[Lemma~2.1]{Mil}, we have
  \begin{align*}
    \Tilde H_{i - 2}(X \suspjoin Y; K)
    &\cong \Tilde H_{i - 3}(X * Y; K) \\
    &\cong \bigoplus_{j + k = i - 4}
    \Tilde H_j(X; K) \otimes \Tilde H_k(Y; K) \\
    &\cong \bigoplus_{j + k = i}
    \Tilde H_{j - 2}(X; K) \otimes \Tilde H_{k - 2}(Y; K).
    \qedhere
  \end{align*}
\end{proof}

\begin{proposition}
  Let $\Lambda_1$ and $\Lambda_2$ be affine monoids.
  Then
  \begin{equation*}
    P_{\Lambda_1 \oplus \Lambda_2}(t, z) =
    P_{\Lambda_1}(t, z) \cdot P_{\Lambda_2}(t, z).
  \end{equation*}
\end{proposition}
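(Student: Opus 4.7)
The plan is to reduce the proposition to the Künneth-type formula for $\suspjoin$ stated in the previous lemma, via the unique decomposition of elements of $\Lambda_1 \oplus \Lambda_2$.

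First, I would observe that every element $\lambda$ of the direct sum $\Lambda_1 \oplus \Lambda_2$ admits a unique decomposition $\lambda = \lambda_1 + \lambda_2$ with $\lambda_i \in \Lambda_i$, and under the Frobenius order of $\Lambda_1 \oplus \Lambda_2$ the open interval $(0, \lambda)$ is the one used to define $\F(\lambda; \Lambda_1 \oplus \Lambda_2)$. By Lemma~\ref{lem:suspjoin}, there is a homeomorphism
\begin{equation*}
  \F(\lambda; \Lambda_1 \oplus \Lambda_2)
  \approx \F(\lambda_1; \Lambda_1) \suspjoin \F(\lambda_2; \Lambda_2),
\end{equation*}
where the conventions on $S^{-2}$ handle the cases $\lambda_1 = 0$ or $\lambda_2 = 0$.

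Next, I would combine this homeomorphism with equation~(\ref{eq:LS}) and the previous lemma (the Künneth-type formula for $\suspjoin$). Writing $\beta_i(\lambda) = \beta_i(\lambda; \Lambda_1 \oplus \Lambda_2)$ and similarly for $\Lambda_i$, this yields
\begin{equation*}
  \beta_i(\lambda) = \Tilde \beta_{i - 2} \bigl( \F(\lambda_1; \Lambda_1) \suspjoin \F(\lambda_2; \Lambda_2) \bigr)
  = \sum_{j + k = i} \beta_j(\lambda_1; \Lambda_1) \cdot \beta_k(\lambda_2; \Lambda_2).
\end{equation*}
The boundary cases ($\lambda_1 = 0$ or $\lambda_2 = 0$, with the formal symbol $S^{-2}$) are consistent because $\Tilde\beta_{-2}(S^{-2}) = 1$ agrees with $\beta_0(0; \Lambda) = \dim_K K = 1$ and vanishes in all other degrees, matching the convention $S^{-2} \suspjoin X = X$.

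Finally, I would substitute into the definition of the Poincar\'e series and rearrange, grouping elements of $\Lambda_1 \oplus \Lambda_2$ by the pair $(\lambda_1, \lambda_2)$ and using $z^\lambda = z^{\lambda_1} \cdot z^{\lambda_2}$:
\begin{align*}
  P_{\Lambda_1 \oplus \Lambda_2}(t, z)
  &= \sum_{i \in \N} \sum_{\lambda_1 \in \Lambda_1} \sum_{\lambda_2 \in \Lambda_2}
     \sum_{j + k = i} \beta_j(\lambda_1; \Lambda_1) \cdot \beta_k(\lambda_2; \Lambda_2) \,
     t^i z^{\lambda_1} z^{\lambda_2} \\
  &= \biggl( \sum_{j, \lambda_1} \beta_j(\lambda_1; \Lambda_1) t^j z^{\lambda_1} \biggr)
     \biggl( \sum_{k, \lambda_2} \beta_k(\lambda_2; \Lambda_2) t^k z^{\lambda_2} \biggr) \\
  &= P_{\Lambda_1}(t, z) \cdot P_{\Lambda_2}(t, z).
\end{align*}
There is no real obstacle here; the only point requiring care is the bookkeeping with $S^{-2}$ and the $i = 0$, $\lambda = 0$ contribution, which is a routine check against the conventions already fixed in the text.
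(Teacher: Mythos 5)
Your proposal is correct and follows essentially the same route as the paper: apply Lemma~\ref{lem:suspjoin} and equation~\eqref{eq:LS} together with the K\"unneth-type lemma for $\suspjoin$ to get $\beta_i(\lambda_1 + \lambda_2; \Lambda_1 \oplus \Lambda_2) = \sum_{j+k=i} \beta_j(\lambda_1; \Lambda_1)\,\beta_k(\lambda_2; \Lambda_2)$, then factor the Poincar\'e series using the unique decomposition $\lambda = \lambda_1 + \lambda_2$. The extra remark on the $S^{-2}$ bookkeeping is a harmless elaboration of conventions already fixed in the text.
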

\begin{proof}
  By the equation~\eqref{eq:LS}, Lemma~\ref{lem:suspjoin} and
  the previous lemma, we have
  \begin{align*}
    \beta_i(\lambda_1 + \lambda_2; \Lambda_1 \oplus \Lambda_2)
    &= \Tilde \beta_{i - 2} \bigl( \F(\lambda_1 + \lambda_2;
    \Lambda_1 \oplus \Lambda_2) \bigr) \\
    &= \Tilde \beta_{i - 2} \bigl( \F(\lambda_1; \Lambda_1) \suspjoin
    \F(\lambda_2; \Lambda_2) \bigr) \\
    &= \sum_{j + k = i} \Tilde \beta_{j - 2}
    \bigl( \F(\lambda_1; \Lambda_1) \bigr) \cdot
    \Tilde \beta_{k - 2}
    \bigl( \F(\lambda_2; \Lambda_2) \bigr) \\
    &= \sum_{j + k = i} \beta_j(\lambda_1; \Lambda_1)
    \cdot \beta_k(\lambda_2; \Lambda_2).
  \end{align*}
  Thus
  \begin{align*}
    P_{\Lambda_1 \oplus \Lambda_2}(t, z)
    &= \sum_{i \in \N} \sum_{\lambda_1 \in \Lambda_1}
    \sum_{\lambda_2 \in \Lambda_2}
    \beta_i(\lambda_1 + \lambda_2; \Lambda_1 \oplus \Lambda_2)
    t^i z^{\lambda_1 + \lambda_2} \\
    &= \sum_{i \in \N} \sum_{\lambda_1 \in \Lambda_1}
    \sum_{\lambda_2 \in \Lambda_2} \sum_{j + k = i}
    \beta_j(\lambda_1; \Lambda_1) \cdot \beta_k(\lambda_2; \Lambda_2)
    t^i z^{\lambda_1 + \lambda_2} \\
    &= \sum_{j \in \N} \sum_{\lambda_1 \in \Lambda_1}
    \beta_j(\lambda_1; \Lambda_1) t^j z^{\lambda_1}
    \cdot \sum_{k \in \N} \sum_{\lambda_2 \in \Lambda_2}
    \beta_k(\lambda_2; \Lambda_2) t^k z^{\lambda_2} \\
    &= P_{\Lambda_1}(t, z) \cdot P_{\Lambda_2}(t, z).
    \qedhere
  \end{align*}
\end{proof}

\section{The main theorem}

Let $\Lambda_1$ and $\Lambda_2$ be affine monoids,
and let $\rho_1$ and $\rho_2$ be reducible elements of
$\Lambda_1$ and $\Lambda_2$, respectively.
Let $\Lambda$ be the quotient of the direct sum $\Lambda_1 \oplus \Lambda_2$
modulo the smallest additive equivalence relation $\sim$ satisfying
$\rho_1 \sim \rho_2$.
We denote the equivalence class of $\rho_1$ and $\rho_2$ simply by $\rho$,
and define
\begin{align*}
  \Hat \Lambda_1 &=
  \{\, \lambda_1 \in \Lambda_1 \,|\, \lambda_1 \not\ge \rho_1 \,\} \\
  \Hat \Lambda_2 &=
  \{\, \lambda_2 \in \Lambda_2 \,|\, \lambda_2 \not\ge \rho_2 \,\}.
\end{align*}

\begin{proposition}
  \label{prp:properties}
  The following hold.
  \begin{enumerate}
    \item
      The quotient $\Lambda$ has the additive monoid structure
      inherited from $\Lambda_1 \oplus \Lambda_2$.
    \item
      With the above monoid structure
      $\Lambda$ is an affine monoid.
    \item
      For any element $\lambda$ of $\Lambda$
      there uniquely exist $n \in \N$, $\Hat \lambda_1 \in \Hat \Lambda_1$
      and $\Hat \lambda_2 \in \Hat \Lambda_2$ which satisfy
      $n \rho + \Hat \lambda_1 + \Hat \lambda_2 = \lambda$.
  \end{enumerate}
\end{proposition}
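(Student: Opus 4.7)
The plan is to first establish an explicit description of the equivalence relation $\sim$ on $\Lambda_1 \oplus \Lambda_2$, then prove part~(3) using Lemma~\ref{lem:remainder}, and finally derive parts~(1) and~(2) from this. Part~(1) is almost immediate: since $\sim$ is additive by definition, the operation of $\Lambda_1 \oplus \Lambda_2$ descends to $\Lambda$ and the monoid axioms are inherited.

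The key bookkeeping step will be the following explicit description of $\sim$: two elements $(\mu_1, \mu_2)$ and $(\mu_1', \mu_2')$ of $\Lambda_1 \oplus \Lambda_2$ are equivalent iff there exist $m, n \in \N$ with $\mu_1 + m \rho_1 = \mu_1' + n \rho_1$ in $\Lambda_1$ and $\mu_2 + n \rho_2 = \mu_2' + m \rho_2$ in $\Lambda_2$. I would verify this by showing that the relation defined by the right-hand side is itself an additive equivalence relation containing $(\rho_1, 0) \sim (0, \rho_2)$, with transitivity handled by adding the shift parameters; conversely, each such witness $(m, n)$ is realized by an explicit chain of elementary moves using the generating relation.

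For part~(3), existence follows by lifting $\lambda$ to some $(\mu_1, \mu_2) \in \Lambda_1 \oplus \Lambda_2$ and applying Lemma~\ref{lem:remainder} in each coordinate to write $\mu_i = \ell_i \rho_i + \Hat\mu_i$ with $\Hat\mu_i \not\ge \rho_i$; since $[(\rho_1, 0)] = \rho = [(0, \rho_2)]$, this gives $\lambda = (\ell_1 + \ell_2)\rho + \Hat\mu_1 + \Hat\mu_2$. For uniqueness, if $n \rho + \Hat\lambda_1 + \Hat\lambda_2 = n' \rho + \Hat\lambda_1' + \Hat\lambda_2'$, I would lift the two sides to representatives $(n \rho_1 + \Hat\lambda_1, \Hat\lambda_2)$ and $(n' \rho_1 + \Hat\lambda_1', \Hat\lambda_2')$ and invoke the explicit description of $\sim$ to produce $m, m' \in \N$ with
\begin{align*}
  (n + m) \rho_1 + \Hat\lambda_1 &= (n' + m') \rho_1 + \Hat\lambda_1', \\
  m' \rho_2 + \Hat\lambda_2 &= m \rho_2 + \Hat\lambda_2'.
\end{align*}
The uniqueness clause of Lemma~\ref{lem:remainder} applied to each line then forces $\Hat\lambda_1 = \Hat\lambda_1'$, $n + m = n' + m'$, $\Hat\lambda_2 = \Hat\lambda_2'$, and $m = m'$, so $n = n'$.

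Part~(2) then falls out. Finite generation is inherited from finite generating sets for $\Lambda_1$ and $\Lambda_2$. For pointedness, if $\lambda + \mu = 0$, I would decompose both summands via~(3), further reduce each $\Hat\lambda_i + \Hat\mu_i$ via Lemma~\ref{lem:remainder}, and compare with the unique decomposition $0 = 0 \cdot \rho + 0 + 0$; pointedness of each $\Lambda_i$ then forces $\Hat\lambda_i = \Hat\mu_i = 0$ and the remaining multiplicities vanish. Cancellativity is handled in the same spirit: from $\lambda + \xi = \mu + \xi$, lift to $\Lambda_1 \oplus \Lambda_2$, apply the explicit form of $\sim$ to the resulting equivalence, and use cancellativity of $\Lambda_1$ and $\Lambda_2$ to strike the $\xi$-contributions from each coordinate, leaving an equivalence $\lambda = \mu$ in $\Lambda$. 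The main obstacle I foresee is verifying the explicit form of $\sim$ with the right transitivity bookkeeping for the shift parameters; everything after that reduces mechanically to Lemma~\ref{lem:remainder} and the corresponding properties of $\Lambda_1$ and $\Lambda_2$.
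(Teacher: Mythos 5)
Your proof plan is correct and complete. Since the paper dismisses this proposition with ``The proof is straightforward,'' there is no proof in the paper to compare against, so what you have written simply fills that gap. The heart of the matter is exactly your explicit characterization of $\sim$: two elements $(\mu_1,\mu_2)$ and $(\mu_1',\mu_2')$ of $\Lambda_1\oplus\Lambda_2$ are equivalent iff there exist $m,n\in\N$ with $\mu_1+m\rho_1=\mu_1'+n\rho_1$ and $\mu_2+n\rho_2=\mu_2'+m\rho_2$. Once that is in hand, parts (1), (2), and (3) reduce mechanically to the corresponding properties of $\Lambda_1$, $\Lambda_2$ and the uniqueness clause of Lemma~\ref{lem:remainder}, exactly as you outline. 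One small detail worth spelling out when you realize a witness $(m,n)$ by a chain of elementary moves: first use cancellativity of $\Lambda_1$ to reduce to the case $\min(m,n)=0$ (replacing $(m,n)$ by $(m-\min(m,n),\,n-\min(m,n))$); then the chain has $|n-m|$ steps and each intermediate state dominates $(\rho_1,0)$ or $(0,\rho_2)$ as needed, so every move is legitimate. With that noted, the argument goes through.
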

\begin{proof}
  The proof is straightforward.
\end{proof}

\begin{theorem}
  The Frobenius complexes of $\Lambda$ satisfy
  \begin{equation*}
    \F(\lambda; \Lambda) \simeq
    \bigvee_{\ell \rho + \lambda_1 + \lambda_2 = \lambda}
    S^{2 \ell - 2} \suspjoin
    \F(\lambda_1; \Lambda_1) \suspjoin \F(\lambda_2; \Lambda_2)
  \end{equation*}
  for $\lambda \in \Lambda$,
  where $\ell$ runs in $\N$, $\lambda_1$ in $\Lambda_1$
  and $\lambda_2$ in $\Lambda_2$.
\end{theorem}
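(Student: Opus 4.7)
The plan is to proceed by induction on the integer $n = n(\lambda)$ from the unique normal form $\lambda = n\rho + \Hat\lambda_1 + \Hat\lambda_2$ of Proposition~\ref{prp:properties}.(3), with the inductive step reduced to the Björner--Wachs--Welker patching theorem (Theorem~\ref{thm:PFT}) applied to the composition poset $C(\lambda;\Lambda)$ (via Proposition~\ref{prp:comp}).

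For the base case $n = 0$, I would show that the proper quotient $\varphi\colon\Lambda_1\oplus\Lambda_2\to\Lambda$ restricts to a poset isomorphism $(0,(\Hat\lambda_1,\Hat\lambda_2))_{\Lambda_1\oplus\Lambda_2}\xrightarrow{\sim}(0,\lambda)_\Lambda$: any $\mu\le\lambda$ in $\Lambda$ lifts uniquely to $(\mu_1,\mu_2)$ with $\mu_j\le\Hat\lambda_j$ by the normal-form argument of Lemma~\ref{lem:remainder} and the uniqueness in Proposition~\ref{prp:properties}.(3), and $\varphi$ is strictly order-preserving because it is proper. Combined with Lemma~\ref{lem:suspjoin}, this gives $\F(\lambda;\Lambda)\approx\F(\Hat\lambda_1;\Lambda_1)\suspjoin\F(\Hat\lambda_2;\Lambda_2)$, which is the unique $(\ell,k_1,k_2)=(0,0,0)$ summand of the wedge.

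For the inductive step ($n\ge 1$) I aim to establish the recursive identity
\[
\F(\lambda;\Lambda) \simeq \Sigma^2\F(\lambda-\rho;\Lambda) \vee \bigvee_{\tilde\mu\in\varphi^{-1}(\lambda)} \F(\tilde\mu;\Lambda_1\oplus\Lambda_2),
\]
where $\lambda-\rho$ has normal-form coefficient $n-1$ and the inner wedge runs over the $n+1$ preimages of $\lambda$. Applied together with the inductive hypothesis for $\lambda-\rho$, the shift $\Sigma^2(S^{2\ell-2}\suspjoin X)\simeq S^{2\ell}\suspjoin X$, and Lemma~\ref{lem:suspjoin} (to rewrite each $\F(\tilde\mu;\Lambda_1\oplus\Lambda_2)$ as $\F(k_1\rho_1+\Hat\lambda_1;\Lambda_1)\suspjoin\F(k_2\rho_2+\Hat\lambda_2;\Lambda_2)$), this reproduces the full wedge decomposition. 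I plan to prove the identity by applying Theorem~\ref{thm:PFT} to a poset map $f\colon C(\lambda;\Lambda)\to Q$, where $Q$ has a minimum $m$ together with $n+1$ maximal elements $q_{(k_1,k_2)}$ (and some auxiliary intermediate structure). The map sends a composition all of whose blocks have zero $\rho$-coefficient in the normal form of Proposition~\ref{prp:properties}.(3) to the $q_{(k_1,k_2)}$ matching its $\Lambda_j$-contents (via Lemma~\ref{lem:going down} applied to $\varphi$), and sends any composition containing a $\rho$-block to $m$. The fiber over $q_{(k_1,k_2)}$ is identified (using Proposition~\ref{prp:comp} and Lemma~\ref{lem:suspjoin}) with a sub-poset whose order complex is $\F(k_1\rho_1+\Hat\lambda_1;\Lambda_1)\suspjoin\F(k_2\rho_2+\Hat\lambda_2;\Lambda_2)$, while the fiber over $m$ deformation-retracts via Lemma~\ref{lem:deform} onto a sub-poset isomorphic to $(0,\lambda-\rho)_\Lambda$ by contracting a distinguished $\rho$-block. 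The auxiliary structure of $Q$ is chosen so that $\oc{Q_{>m}}\simeq S^1$, which makes the $m$-summand equal $S^1*\F(\lambda-\rho;\Lambda)\simeq\Sigma^2\F(\lambda-\rho;\Lambda)$; the other summands $\oc{Q_{>q_{(k_1,k_2)}}}*\oc{f^{-1}(Q^{\le q_{(k_1,k_2)}})}$ collapse to the Frobenius complexes because $\oc{Q_{>q_{(k_1,k_2)}}}=\emptyset$ and join with the empty complex is the identity.

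The main obstacle will be the explicit design of $Q$ achieving $\oc{Q_{>m}}\simeq S^1$ while supporting the correct null-homotopy of each inclusion $\oc{f^{-1}(Q^{<q})}\hookrightarrow\oc{f^{-1}(Q^{\le q})}$ (which one expects to follow from the conical structure obtained by inserting or contracting the distinguished $\rho$-block, via Quillen's Proposition~1.3 or Lemma~\ref{lem:deform}), together with the careful identification of the fiber over $m$ as homotopy equivalent to $\F(\lambda-\rho;\Lambda)$; this last step requires showing that the choice of distinguished $\rho$-block can be made compatibly across the poset of compositions.
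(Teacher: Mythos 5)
Your proposal takes a genuinely different route from the paper. The paper does not argue by induction on $n$; instead, it models $\F(\lambda;\Lambda)$ as the homotopy colimit of a diagram of posets over the poset $P$ of non-empty subsets of $[n]$, uses Thomason's theorem (Theorem~\ref{thm:Thomason}) to transfer between two naturally homeomorphic diagrams $\oc X$ and $\oc Y$, identifies $\oc{P\ltimes Y}$ with $\F(\lambda;\Lambda)$ via Quillen's fiber theorem, and only then applies Theorem~\ref{thm:PFT} to the projection $P\ltimes X\to P$, whose fibers are directly computed in terms of $\F(\cdot;\Lambda_1)$, $\F(\cdot;\Lambda_2)$, and $\F(\cdot;\N)$. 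Your base case is correct and essentially matches what the normal-form Proposition~\ref{prp:properties}.(3) gives, but the inductive step contains a real gap that I do not think can be repaired within the framework you sketch.

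The recursion
\begin{equation*}
\F(\lambda;\Lambda)\simeq\Sigma^2\F(\lambda-\rho;\Lambda)\vee\bigvee_{\tilde\mu\in\varphi^{-1}(\lambda)}\F(\tilde\mu;\Lambda_1\oplus\Lambda_2)
\end{equation*}
is a correct consequence of the theorem and does combinatorially reproduce the full wedge under iteration, so the strategy is sound in principle. However, the crucial ingredient --- a poset $Q$ with minimum $m$, maximal elements $q_{(k_1,k_2)}$ ($k_1+k_2=n$), and ``auxiliary intermediate structure'' making $\oc{Q_{>m}}\simeq S^1$ while all other summands of Theorem~\ref{thm:PFT} vanish --- is never constructed, and the obvious candidates fail. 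If $Q_{>m}$ is the antichain $\{q_{(k_1,k_2)}\}$, the summand at $m$ is $S^0*\oc{f^{-1}(m)}$ only for $n=1$; in general it is $(\bigvee_n S^0)*\oc{f^{-1}(m)}$, which is $\bigvee_n\Sigma\F(\lambda-\rho;\Lambda)$, not $\Sigma^2\F(\lambda-\rho;\Lambda)$. If instead you insert two elements $a,b$ with $m<a,b<q_{(k_1,k_2)}$ for all $(k_1,k_2)$, then $\oc{Q_{>m}}\approx S^0*(\text{$n+1$ points})\simeq\bigvee_nS^1$, which again is only $S^1$ when $n=1$; and for $n\ge2$ you would additionally have to show the summands at $a,b$ vanish and the null-homotopy hypotheses hold at $a,b$, none of which is addressed. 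More elaborate cyclic designs of $Q_{>m}$ with order complex $S^1$ introduce many intermediate elements, each contributing a summand that must be shown contractible and each requiring its own null-homotopy verification; none of these is defined, let alone verified. Finally, the identification $\oc{f^{-1}(m)}\simeq\F(\lambda-\rho;\Lambda)$ is asserted via ``contracting a distinguished $\rho$-block,'' but you yourself flag that a coherent choice of distinguished block across the poset is unresolved, and the map $f$ itself is not fully specified (what happens to compositions that have a block with positive $\rho$-coefficient which is not literally a $\rho$-block?). In short: the proposal identifies a correct target recursion, but the heart of the matter --- designing $Q$ and $f$ so that Theorem~\ref{thm:PFT} applies and yields exactly $\Sigma^2\F(\lambda-\rho;\Lambda)\vee\bigvee\F(\tilde\mu;\Lambda_1\oplus\Lambda_2)$ --- is missing and is not a routine verification. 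The paper avoids these difficulties entirely by working over the richer indexing poset $P$ of subsets of $[n]$, where the $S^{2\ell-2}$ factors arise naturally as $\oc{P_{>\{\ell_1,\dots,\ell_1+\ell\}}}$ (boundary of an $\ell$-simplex), rather than being forced out of a single step of the fiber theorem.
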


\begin{proof}
  Take $n \in \N$, $\Hat \lambda_1 \in \Hat \Lambda_1$ and
  $\Hat \lambda_2 \in \Hat \Lambda_2$ which satisfy
  $\lambda = n \rho + \Hat \lambda_1 + \Hat \lambda_2 \in \Lambda$.
  Let $P$ be the poset of non-empty subset of $[n] = \{ 0, \dots, n \}$
  ordered by reverse inclusion, that is,
  \begin{equation*}
    p \le q \iff p \supset q
    \quad (p, q \in P).
  \end{equation*}
  Define
  \begin{equation*}
    \Tilde \Lambda =
    \Lambda_1 \oplus \N \alpha_1 \oplus \dots
    \oplus \N \alpha_n \oplus \Lambda_2.
  \end{equation*}
  For $p \in P$ let $\Lambda_p$ be the quotient monoid of $\Tilde \Lambda$
  modulo the additive equivalence relation $\sim_p$ generated by
  \begin{equation*}
    \alpha_i \sim_p \alpha_{i + 1}
    \quad (i \in [n] \setminus p),
  \end{equation*}
  where $\alpha_0$ and $\alpha_{n + 1}$ denote $\rho_1$ and $\rho_2$,
  respectively,
  and let $\Tilde \varphi_p \colon \Tilde \Lambda \to \Lambda_p$ be
  the canonical surjection.
  Note that for $p = \{ p_0 < \dots < p_s \} \in P$ the composition
  \begin{equation}
    \label{eq:isom}
    \Lambda_1 \oplus \N \alpha_{p_1} \oplus \dots
    \oplus \N \alpha_{p_s} \oplus \Lambda_2
    \hookrightarrow \Tilde \Lambda
    \xrightarrow{\Tilde \varphi_p} \Lambda_p
  \end{equation}
  is an isomorphism.
  For $p, q \in P$ with $p \le q$
  let $\varphi^p_q \colon \Lambda_p \to \Lambda_q$ be
  the homomorphism satisfying
  $\varphi^p_q \circ \Tilde \varphi_p = \Tilde \varphi_q$.
  Note that $\varphi^p_q$ is proper.
  Define
  \begin{equation*}
    \Tilde \lambda =
    \Hat \lambda_1 + \alpha_1 + \dots + \alpha_n + \Hat \lambda_2
    \in \Tilde \Lambda,
  \end{equation*}
  and for $p \in P$ let $\lambda_p = \Tilde \varphi_p(\Tilde \lambda)$.
  Then $\varphi^p_q$ sends $\lambda_p$ to $\lambda_q$
  for $p, q \in P$ with $p \le q$.

  Let $X$ and $Y$ be the $P$-diagrams of posets defined as follows.
  For $p \in P$ let
  \begin{align*}
    X(p) &= (0, \lambda_p)_{\Lambda_p} \\
    Y(p) &= C(\lambda_p; \Lambda_p)
  \end{align*}
  and for $p, q \in P$ with $p \le q$
  let $X^p_q \colon X(p) \to X(q)$ and $Y^p_q \colon Y(p) \to Y(q)$ be
  the poset maps induced by $\varphi^p_q$.
  By Proposition~\ref{prp:comp}, two $P$-diagrams $\oc X$ and $\oc Y$
  are naturally homeomorphic.
  By Theorem~\ref{thm:Thomason}, we have
  \begin{equation*}
    \oc{P \ltimes X} \simeq
    \hocolim \oc X \approx
    \hocolim \oc Y \simeq
    \oc{P \ltimes Y}.
  \end{equation*}
  The rest of proof is divided into two parts:
  In the first part, we show that $\oc{P \ltimes Y}$ is homotopy equivalent
  to $\F(\lambda; \Lambda)$.
  In the second part, we show that $\oc{P \ltimes X}$
  is homotopy equivalent to the right-hand side of the theorem.

  Let $\Tilde \pi \colon \Tilde \Lambda \to \Lambda$ be
  the homomorphism defined by
  \begin{align*}
    \Tilde \pi(\lambda_1) &= \lambda_1 && (\lambda_1 \in \Lambda_1) \\
    \Tilde \pi(\alpha_i) &= \rho && (i = 1, \dots, n) \\
    \Tilde \pi(\lambda_2) &= \lambda_2 && (\lambda_2 \in \Lambda_2).
  \end{align*}
  For $p \in P$ let $\pi^p \colon \Lambda_p \to \Lambda$ be
  the homomorphism satisfying
  $\pi^p \circ \Tilde \varphi_p = \Tilde \pi$.
  Then $\Tilde \pi$ is proper and sends $\Tilde \lambda$ to $\lambda$,
  and thus $\pi^p$ is proper and sends $\lambda_p$ to $\lambda$.
  Let $f \colon P \ltimes Y \to C(\lambda; \Lambda)$ be
  the poset map defined by
  \begin{equation*}
    f \bigl( (p, \xi) \bigr) = \pi^p_*(\xi)
    \quad \bigl( (p, \xi) \in P \ltimes Y \bigr).
  \end{equation*}
  We now show that $f$ induces a homotopy equivalence
  by using Theorem~\ref{thm:QFL}.
  Let $\zeta = [ \zeta^{(1)} | \dotsb | \zeta^{(k)} ] \in C(\lambda; \Lambda)$.
  It suffices to show that $\oc{f^{-1}(\ge \zeta)}$ is contractible,
  where $f^{-1}(\ge \zeta)$ is an abbreviation for
  $f^{-1}(C(\lambda; \Lambda)_{\ge \zeta})$.
  Let $(p, \xi) \in f^{-1}(\ge \zeta)$, that is, $\pi^p_*(\xi) \ge \zeta$.
  By Lemma~\ref{lem:going down},
  there uniquely exists $\xi' \in C(\lambda_p; \Lambda_p)$
  which satisfies $\xi' \le \xi$ and $\pi^p_*(\xi') = \zeta$.
  Then $(p, \xi')$ is a maximum element of $f^{-1}(\zeta)^{\le (p, \xi)}$,
  since for $(q, \eta) \in f^{-1}(\zeta)^{\le (p, \zeta)}$
  the uniqueness of $\xi'$ implies $\varphi^q_p(\eta) = \xi'$
  and thus $(q, \eta) \le (p, \xi')$.
  By Lemma~\ref{lem:deform},
  $\oc{f^{-1}(\zeta)}$ is a deformation retract of $\oc{f^{-1}(\ge \zeta)}$.

  Using the bijection of Proposition~\ref{prp:properties}.(3),
  define the map $\rem_1 \colon \Lambda \to \Hat \Lambda_1$
  as the composition
  \begin{equation*}
    \Lambda \cong \N \rho \times \Hat \Lambda_1 \times \Hat \Lambda_2
    \xrightarrow{\mathrm{proj}} \Hat \Lambda_1.
  \end{equation*}
  Using the isomorphism~\eqref{eq:isom} and
  the bijection of Lemma~\ref{lem:remainder},
  also define the map $\rem^p_1 \colon \Lambda_p \to \Lambda_1$
  as the composition
  \begin{equation*}
    \Lambda_p
    \cong \Lambda_1 \oplus \N \alpha_{p_1} \oplus \dots
    \oplus \N \alpha_{p_s} \oplus \Lambda_2
    \xrightarrow{\mathrm{proj}} \Lambda_1
    \cong \N \rho_1 \times \Hat \Lambda_1
    \xrightarrow{\mathrm{proj}} \Hat \Lambda_1
  \end{equation*}
  for $p = \{ p_0 < \dots < p_s \} \in P$.
  Then we can check that
  \begin{equation*}
    \rem_1 \circ \pi^p = \rem^p_1.
  \end{equation*}
  Let
  \begin{equation*}
    \ell = \max \Bigl\{\, \ell \in \N \Bigm|
    \ell \rho_1 \le \sum_{i = 1}^k \rem_1(\zeta^{(i)}) \,\Bigr\},
  \end{equation*}
  and let
  \begin{equation*}
    S = \{\, (p, \xi) \in f^{-1}(\zeta) \mid \ell \in p \,\}.
  \end{equation*}
  We now show that $\oc S$ is a deformation retract of $\oc{f^{-1}(\zeta)}$
  by using Lemma~\ref{lem:deform}.
  Let $(p, \xi) \in f^{-1}(\zeta)$, where $p = \{ p_0 < \dots < p_s \}$
  and $\xi = [ \xi^{(1)} | \dotsb | \xi^{(k)} ]$.
  Then we have
  \begin{equation*}
    \lambda_p
    = \sum_{i = 1}^k \xi^{(i)} \ge \sum_{i = 1}^k \rem^p_1(\xi^{(i)})
    = \sum_{i = 1}^k \rem_1(\zeta^{(i)}) \ge \ell \rho_1.
  \end{equation*}
  Thus at least $\ell$ of $\alpha_1, \dots, \alpha_n$ are
  identified with $\rho_1$,
  which implies $\ell \le p_0$.
  If $\ell = p_0$ holds, then $(p, \xi)$ itself is
  a maximum element of $S^{\le (p, \xi)}$.
  Assume that $\ell < p_0$, and let $\bar p = \{ \ell \} \cup p$.
  Using the isomorphism~\eqref{eq:isom} and Lemma~\ref{lem:remainder},
  we can see that there uniquely exist $\bar \xi^{(i)} \in \Lambda_{\bar p}$
  which satisfies $\varphi^{\bar p}_p(\bar \xi^{(i)}) = \xi^{(i)}$ and
  $\bar \xi^{(i)} \not\ge \rho_1$ for each $i$.
  Then $\bar \xi = [ \bar \xi^{(1)} | \dotsb | \bar \xi^{(k)} ]$ satisfies
  $\bar \xi \in C(\lambda_{\bar p}; \Lambda_{\bar p})$,
  ${\varphi^{\bar p}_p}_*(\bar \xi) = \xi$,
  and thus $(\bar p, \bar \xi) \in S^{\le (p, \xi)}$.
  Moreover, the set
  \begin{equation*}
    \{\,
      \xi' \in C(\lambda_{\bar p}; \Lambda_{\bar p})
      \mid
      {\varphi^{\bar p}_p}_*(\xi') = \xi
    \,\}
  \end{equation*}
  consists of the unique element $\bar \xi$.
  For $(q, \eta) \in S^{\le (p, \xi)}$ we have
  $q \le \bar p$ and ${\varphi^q_{\bar p}}_*(\eta) = \bar \xi$
  which imply $(q, \eta) \le (\bar p, \bar \xi)$.
  Hence $(\bar p, \bar \xi)$ is a maximum element of $S^{\le (p, \xi)}$.
  Thus $\oc S$ is a deformation retract of $\oc{f^{-1}(\zeta)}$.

  Let us construct a maximum element of $S$.
  Using the isomorphism~\eqref{eq:isom} and Lemma~\ref{lem:remainder},
  we can see that there uniquely exists
  $\bar \zeta^{(i)} \in \Lambda_{\{ \ell \}}$
  which satisfies $\pi^{\{ \ell \}}(\bar \zeta^{(i)}) = \zeta^{(i)}$ and
  $\bar \zeta^{(i)} \not\ge \rho_1$ for each $i$.
  Then $\bar \zeta = [ \bar \zeta^{(1)} | \dotsb | \bar \zeta^{(k)} ]$
  satisfies $\bar \zeta \in C(\lambda_{\{ \ell \}}; \Lambda_{\{ \ell \}})$,
  $\pi^{\{ \ell \}}_*(\bar \zeta) = \zeta$,
  and thus $(\{ \ell \}, \bar \zeta) \in S$.
  Moreover, the set
  \begin{equation*}
    \{\, \zeta' \in C(\lambda_{\{ \ell \}}; \Lambda_{\{ \ell \}}) \mid
    \pi^{\{ \ell \}}_*(\zeta') = \zeta \,\}
  \end{equation*}
  consists of the unique element $\bar \zeta$.
  For $(q, \eta) \in S$ we have $q \le \{ \ell \}$ and
  ${\varphi^q_{\{ \ell \}}}_*(\eta) = \bar \zeta$,
  which imply $(q, \eta) \le (\{ \ell \}, \bar \zeta)$.
  Hence $(\{ \ell \}, \bar \zeta)$ is a maximum element of $S$,
  and thus $\oc S$ is contractible.
  From the above we have
  \begin{equation*}
    \oc{f^{-1}(\ge \zeta)} \simeq \oc{f^{-1}(\zeta)} \simeq \oc S \simeq \pt.
  \end{equation*}
  Thus we conclude that
  \begin{equation*}
    \oc{P \ltimes Y} \simeq
    \oc{C(\lambda; \Lambda)} \approx \F(\lambda; \Lambda).
  \end{equation*}

  Next, we consider $P \ltimes X$.
  Define $g \colon P \ltimes X \to P$ by
  \begin{equation*}
    g(p, x) = p
    \quad \bigl( (p, x) \in P \ltimes X \bigr).
  \end{equation*}
  Let us check that $g$ satisfies the assumption of Theorem~\ref{thm:PFT}.
  Clearly, $P$ has the minimum element $[n] = \{ 0, \dots, n \}$.
  In the case $n = 0$ the assumption is trivially satisfied.
  Assume that $n \ge 1$.
  Let $p = \{ p_0 < \dots < p_s \} \in P$ with $p > [n]$.
  Note that
  \begin{gather*}
    \begin{split}
      g^{-1}(P^{< p})
      &= P^{< p} \ltimes X \\
      &= \{\, (q, \mu) \,|\, q \in P^{< p}, \ \mu \in X(q) \,\}
    \end{split} \\
    \begin{split}
      g^{-1}(P^{\le p})
      &= P^{\le p} \ltimes X \\
      &= \{\, (q, \mu) \,|\, q \in P^{\le p}, \ \mu \in X(q) \,\}.
    \end{split}
  \end{gather*}
  Moreover, the poset map $P^{\le p} \ltimes X \to X(p)$
  which sends $(q, \mu)$ to $X^q_p(\mu)$ induces
  a homotopy equivalence,
  since the canonical injection $X(p) \hookrightarrow P^{\le p} \ltimes X$
  induces a homotopy inverse.
  Thus it suffices to show that the composition
  \begin{equation*}
    \oc{P^{< p} \ltimes X} \hookrightarrow
    \oc{P^{\le p} \ltimes X} \to \oc{X(p)}
  \end{equation*}
  is homotopic to a constant map.
  This map is induced by the poset map
  \begin{equation*}
    \varphi \colon P^{< p} \ltimes X \to X(p)
  \end{equation*}
  which sends $(q, \mu)$ to $\varphi^q_p(\mu)$.
  By definition, we have
  \begin{align*}
    \oc{X(p)}
    &= \F(\lambda_p; \Lambda_p)
    \intertext{using the isomorphism~\eqref{eq:isom}}
    &\approx \F(p_0 \rho_1 + \Hat \lambda_1 + (p_1 - p_0) \alpha_{p_1} + \dots
    + (p_s - p_{s - 1}) \alpha_{p_s} + (n - p_s) \rho_2 + \Hat \lambda_2; \\
    &\hspace{19em} \Lambda_1 \oplus \N \alpha_{p_1} \oplus \dots \oplus
    \N \alpha_{p_s} \oplus \Lambda_2)
    \intertext{using Lemma~\ref{lem:suspjoin}}
    &\approx \F(p_0 \rho_1 + \Hat \lambda_1; \Lambda_1)
    \suspjoin \F(p_1 - p_0; \N)
    \suspjoin \dotsb \\
    &\hspace{12em} \dotsb \suspjoin \F(p_s - p_{s - 1}; \N)
    \suspjoin \F((n - p_s) \rho_2 + \Hat \lambda_2; \Lambda_2).
  \end{align*}
  By Proposition~\ref{prp:N},
  if $p = \{ p_0, \dots, p_s \}$ does not have the form
  $\{ \ell_1, \ell_1 + 1, \dots, \ell_1 + \ell \}$,
  then $\oc{X(p)}$ is contractible, and thus
  $\oc \varphi$ is homotopic to a constant map.
  Assume that $p = \{ \ell_1, \ell_1 + 1, \dots, \ell_1 + \ell \}$,
  and let $\ell_2 = n - (\ell_1 + \ell)$.
  Then we have
  \begin{align*}
    \oc{X(p)}
    &\simeq \F(\ell_1 \rho_1 + \Hat \lambda_1; \Lambda_1) \suspjoin
    \underbrace{
      S^{-1} \suspjoin \dotsb \suspjoin S^{-1}
    }_\ell
    \suspjoin \> \F(\ell_2 \rho_2 + \Hat \lambda_2; \Lambda_2) \\
    &\approx S^{\ell - 2} \suspjoin
    \F(\ell_1 \rho_1 + \Hat \lambda_1; \Lambda_1) \suspjoin
    \F(\ell_2 \rho_2 + \Hat \lambda_2; \Lambda_2).
  \end{align*}
  Take non-zero elements $\sigma_1$ and $\tau_1$ of $\Lambda_1$
  satisfying $\sigma_1 + \tau_1 = \rho_1$.
  Similarly, take non-zero elements $\sigma_2$ and $\tau_2$ of $\Lambda_2$
  satisfying $\sigma_2 + \tau_2 = \rho_2$.
  For $q \in P^{< p}$
  let $\psi^q_p \colon \Lambda_q \to \Lambda_p$ be
  the homomorphism defined by
  \begin{align*}
    \psi^q_p(\mu_1) &= \mu_1 \quad (\mu_1 \in \Lambda_1) \\
    \psi^q_p(\alpha_i) &=
    \begin{cases}
      \sigma_1 & \text{if $\min q < i \le \ell_1$} \\
      \sigma_2 & \text{if $\ell_1 + \ell < i \le \max q$} \\
      \alpha_i & \text{otherwise}
    \end{cases} \\
    \psi^q_p(\mu_2) &= \mu_2 \quad (\mu_2 \in \Lambda_2).
  \end{align*}
  Then $\psi^q_p$ is well-defined, proper
  and satisfies $\psi^q_p \le \varphi^q_p$.
  Moreover, $\psi^r_p \le \psi^q_p \circ \varphi^r_q$ holds
  for $q, r \in P$ with $r \le q < p$.
  Define the map $\psi \colon P^{< p} \ltimes X \to X(p)$ by
  \begin{equation*}
    \varphi \bigl( (q, \mu) \bigr) = \psi^q_p(\mu)
    \quad \bigl( (q, \mu) \in P^{< p} \ltimes X \bigr).
  \end{equation*}
  Then $\psi$ is a poset map, and satisfies
  $\psi \le \varphi$ and thus $\oc \psi \simeq \oc \varphi$.
  Therefore it suffices to show that the image of $\oc \psi$ is contractible
  in $\oc{X(p)}$.
  For $q \in P^{< p}$
  either $\min q < \ell_1$ or $\ell_1 + \ell < \max q$ holds.
  By the definition of $\psi^q_p$,
  $\min q < \ell_1$ implies $\psi^q_p(\lambda_q) \le \lambda_p - \tau_1$,
  and $\ell_1 + \ell < \max q$ implies
  $\psi^q_p(\lambda_q) \le \lambda_p - \tau_2$.
  If $\ell_1 = 0$,
  then $\ell_1 + \ell < \max q$ holds for each $q \in P^{< p}$,
  and thus the image of $\oc \psi$ is contained in
  $\oc{(0, \lambda_p - \tau_2]_{\Lambda_p}}$.
  Similarly, in the case $\ell_2 = 0$ the image of $\oc \psi$
  is contained in $\oc{(0, \lambda_p - \tau_1]_{\lambda_p}}$.
  Assume that both $\ell_1$ and $\ell_2$ are positive.
  Then we have
  \begin{align*}
    \image \oc \psi
    &= \oc{\image \psi} \\
    &= \oc{\bigcup_{q < p}
    \psi^p_q \bigl( (0, \lambda_q)_{\Lambda_q} \bigr)} \\
    &\subset \oc{(0, \lambda_p - \tau_1]_{\Lambda_p}}
    \cup \oc{(0, \lambda_p - \tau_2]_{\Lambda_p}}.
  \end{align*}
  Moreover,
  \begin{equation*}
    \oc{(0, \lambda_p - \tau_1]_{\Lambda_p}}
    \cap \oc{(0, \lambda_p - \tau_2]_{\Lambda_p}}
    = \oc{(0, \lambda_p - \tau_1 - \tau_2]_{\Lambda_p}}.
  \end{equation*}
  Since each of $\oc{(0, \lambda_p - \tau_1]_{\Lambda_p}}$,
  $\oc{(0, \lambda_p - \tau_2]_{\Lambda_p}}$ and their intersection
  is contractible, so is their union.
  Hence in any case the image of $\oc \psi$ is
  contained in a contractible set.

  By Theorem~\ref{thm:PFT}, we have
  \begin{align*}
    \oc{P \ltimes X}
    &\simeq \bigvee_{p \in P} \oc{P_{> p}} * \oc{g^{-1}(P^{\le p})} \\
    &\simeq \bigvee_{p \in P} \oc{P_{> p}} * \oc{X(p)}
    \intertext{as seen above}
    &\simeq \bigvee_{\ell_1 + \ell + \ell_2 = n}
    \oc{P_{> \{ \ell_1, \dots, \ell_1 + \ell \}}} *
    S^{\ell - 2} \suspjoin
    \F(\ell_1 \rho_1 + \Hat \lambda_1; \Lambda_1) \suspjoin
    \F(\ell_2 \rho_2 + \Hat \lambda_2; \Lambda_2) \\
    &\simeq \bigvee_{\ell \rho + \lambda_1 + \lambda_2 = \lambda}
    S^{2 \ell - 2} \suspjoin
    \F(\lambda_1; \Lambda_1) \suspjoin
    \F(\lambda_2; \Lambda_2).
  \end{align*}
  Note that $\oc{P_{> \{ \ell_1, \dots, \ell_1 + \ell \}}}$
  is homeomorphic to the boundary of $\ell$-simplex.
\end{proof}

\begin{corollary}
  The Poincar{\'e} series of $\Lambda$ is expressed as
  \begin{equation*}
    P_{\Lambda}(t, z) =
    \frac{P_{\Lambda_1}(t, z) \cdot P_{\Lambda_2}(t, z)}{1 - t^2 z^\rho}.
  \end{equation*}
\end{corollary}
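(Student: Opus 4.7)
The plan is to combine three ingredients already in hand: the Laudal--Sletsj{\o}e identity~\eqref{eq:LS} converting $\beta_i(\lambda;\Lambda)$ into $\Tilde\beta_{i-2}\bigl(\F(\lambda;\Lambda)\bigr)$, the main theorem, which expresses $\F(\lambda;\Lambda)$ as a wedge of iterated suspension-joins, and the suspjoin Betti-number lemma proved just before the analogous proposition for $\Lambda_1 \oplus \Lambda_2$. The main theorem turns each Frobenius complex into a wedge, the other two tools turn each wedge summand into a convolution of $\beta$'s, and at that point the double sum defining $P_\Lambda$ decouples into three independent geometric/formal series.

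First I would apply the main theorem together with the additivity of reduced Betti numbers across wedges to write
\begin{equation*}
  \beta_i(\lambda;\Lambda)
  = \sum_{\ell \rho + \lambda_1 + \lambda_2 = \lambda}
  \Tilde\beta_{i-2}\bigl( S^{2\ell - 2} \suspjoin \F(\lambda_1;\Lambda_1) \suspjoin \F(\lambda_2;\Lambda_2) \bigr).
\end{equation*}
Next I would iterate the suspjoin Betti-number lemma twice. Since $\Tilde\beta_{j-2}(S^{2\ell-2})$ equals $1$ for $j = 2\ell$ and vanishes otherwise (this remains valid when $\ell = 0$ thanks to the convention $\Tilde\beta_{-2}(S^{-2}) = 1$), each summand collapses, after reapplying~\eqref{eq:LS}, to
\begin{equation*}
  \sum_{j + k = i - 2\ell}
  \beta_j(\lambda_1;\Lambda_1) \cdot \beta_k(\lambda_2;\Lambda_2).
\end{equation*}

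Finally I would substitute into $P_\Lambda(t,z) = \sum_{i, \lambda} \beta_i(\lambda;\Lambda) t^i z^\lambda$ and change indices via $i = 2\ell + j + k$ and $\lambda = \ell \rho + \lambda_1 + \lambda_2$. The monomial $t^i z^\lambda$ factors as $t^{2\ell} z^{\ell \rho} \cdot t^j z^{\lambda_1} \cdot t^k z^{\lambda_2}$, so the triply indexed sum separates into the product of $\sum_{\ell \in \N} t^{2\ell} z^{\ell\rho}$, $P_{\Lambda_1}(t,z)$, and $P_{\Lambda_2}(t,z)$; the first factor is the geometric series $1 / (1 - t^2 z^\rho)$, giving the claimed formula.

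No substantive obstacle is expected: the argument is essentially a three-variable generating-function calculation riding on top of the main theorem. The only bookkeeping point is that the degenerate summands (those with $\ell = 0$ or with some $\lambda_i = 0$) involve the formal symbol $S^{-2}$, but the conventions $\Tilde\beta_{-2}(S^{-2}) = 1$ and $S^{-2} \suspjoin X = X$ make the iterated suspjoin Künneth formula apply uniformly, so no separate case analysis is required.
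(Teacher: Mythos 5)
Your proposal is correct and follows essentially the same route as the paper: apply the Laudal--Sletsj{\o}e identity~\eqref{eq:LS}, expand via the main theorem with wedge-additivity of reduced Betti numbers, iterate the suspjoin K\"unneth lemma and use that $\Tilde\beta_{m-2}(S^{2\ell-2})$ is the indicator of $m = 2\ell$, then reindex the generating function so that $t^i z^\lambda$ factors as $t^{2\ell}z^{\ell\rho}\cdot t^j z^{\lambda_1}\cdot t^k z^{\lambda_2}$ and the triple sum separates into a geometric series times $P_{\Lambda_1}P_{\Lambda_2}$. The paper carries out exactly this computation, including the same handling of the $S^{-2}$ convention.
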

\begin{proof}
  By the previous theorem, we have
  \begin{align*}
    \beta_i(\lambda; \Lambda)
    &= \Tilde \beta_{i - 2} \bigl( \F(\lambda; \Lambda) \bigr) \\
    &= \Tilde \beta_{i - 2} \biggl(
    \bigvee_{\ell \rho + \lambda_1 + \lambda_2 = \lambda}
    S^{2 \ell - 2} \suspjoin \F(\lambda_1; \Lambda_1)
    \suspjoin \F(\lambda_2; \Lambda_2) \biggr) \\
    &= \sum_{\ell \rho + \lambda_1 + \lambda_2 = \lambda}
    \Tilde \beta_{i - 2} \bigl( S^{2 \ell - 2} \suspjoin
    \F(\lambda_1; \Lambda_1) \suspjoin \F(\lambda_2; \Lambda_2) \bigr) \\
    &= \sum_{\ell \rho + \lambda_1 + \lambda_2 = \lambda}
    \sum_{m + j + k = i}
    \Tilde \beta_{m - 2} (S^{2 \ell - 2}) \cdot
    \Tilde \beta_{j - 2} \bigl( \F(\lambda_1; \Lambda_1) \bigr) \cdot
    \Tilde \beta_{k - 2} \bigl( \F(\lambda_2; \Lambda_2) \bigr). \\
    &= \sum_{\ell \rho + \lambda_1 + \lambda_2 = \lambda}
    \sum_{2 \ell + j + k = i}
    \beta_j(\lambda_1; \Lambda_1) \cdot \beta_k(\lambda_2; \Lambda_2).
  \end{align*}
  Thus
  \begin{align*}
    P_\Lambda(t, z)
    &= \sum_{i \in \N} \sum_{\lambda \in \Lambda}
    \beta_i(\lambda; \Lambda) t^i z^\lambda \\
    &= \sum_{i \in \N} \sum_{\lambda \in \Lambda}
    \sum_{\ell \rho + \lambda_1 + \lambda_2 = \lambda}
    \sum_{2 \ell + j + k = i}
    \beta_j(\lambda_1; \Lambda_1) \cdot \beta_k(\lambda_2; \Lambda_2)
    t^i z^\lambda \\
    &= \sum_{\ell \in \N} \sum_{j \in \N} \sum_{k \in \N}
    \sum_{\lambda_1 \in \Lambda_1} \sum_{\lambda_2 \in \Lambda_2}
    \beta_j(\lambda_1; \Lambda_1) \cdot \beta_k(\lambda_2; \Lambda_2)
    t^{2 \ell + j + k} z^{\ell \rho + \lambda_1 + \lambda_2} \\
    &= \sum_{\ell \in \N} t^{2 \ell} z^\rho \cdot
    \sum_{j \in \N} \sum_{\lambda_1 \in \Lambda_1}
    \beta_j(\lambda_1; \Lambda_1) t^j z^{\lambda_1} \cdot
    \sum_{k \in \N} \sum_{\lambda_2 \in \Lambda_2}
    \beta_k(\lambda_2; \Lambda_2) t^k z^{\lambda_2} \\
    &= \frac{P_{\Lambda_1}(t, z) \cdot P_{\Lambda_2}(t, z)}{1 - t^2 z^\rho}.
    \qedhere
  \end{align*}
\end{proof}

\section*{Acknowledgements}
The author wishes to express his thanks to
Prof.\ Satoshi Murai, Prof.\ Takafumi Shibuta and Prof.\ Shunsuke Takagi
for several helpful comments
concerning the Poincar{\'e} series of commutative algebras.

\end{document}